\documentclass{amsart}
\usepackage[utf8]{inputenc}
\usepackage{amssymb,bm,amsfonts, stmaryrd, amsmath, amsthm, enumerate, mathtools}

\usepackage[colorlinks]{hyperref}
\hypersetup{
 colorlinks=true,
 linkcolor=blue,
 filecolor=magenta, 
 urlcolor=cyan,
}
\usepackage{wrapfig,tikz, tikz-cd}
\usetikzlibrary{arrows}

\usepackage[capitalise]{cleveref}
\crefformat{equation}{(#2#1#3)}
\crefrangeformat{equation}{(#3#1#4--#5#2#6)}
\crefformat{enumi}{(#2#1#3)}
\crefrangeformat{enumi}{(#3#1#4--#5#2#6)}

\newtheorem{theorem}{Theorem}[section]
\newtheorem{lemma}[theorem]{Lemma}

\newtheorem{corollary}[theorem]{Corollary}

\newcounter{intro}
\newtheorem{introthm}[intro]{Theorem}

\theoremstyle{definition}
\newtheorem{definition}[theorem]{Definition}

\newtheorem{remark}[theorem]{Remark}
\newtheorem{construction}[theorem]{Construction}
\newtheorem{notation}[theorem]{Notation}
\newtheorem{discussion}[theorem]{Discussion}
\newtheorem{chunk}[theorem]{}

\newtheorem*{ack}{Acknowledgements}

\usepackage{comment}

\newcommand{\ind}{{\operatorname{ind}}}
\newcommand{\gind}{{\operatorname{ind^\gamma}}}

\newcommand{\Tor}{{\operatorname{Tor}}}

\newcommand{\del}{\partial}

\DeclareMathOperator{\ch}{char}

\DeclareMathOperator{\comp}{\gamma}

\newcommand{\xra}{\xrightarrow}

\newcommand{\vp}{\varphi}

\newcommand{\shift}{{\scriptstyle\mathsf{\Sigma}}}

\newcommand{\lotimes}{\otimes^{\sf L}}

\newcommand{\m}{\mathfrak{m}}


\newcommand{\ve}{\varepsilon}

\newcommand{\gdev}{\varepsilon^\gamma}

\title[Dg resolutions in prime characteristic]{A comparison of dg algebra resolutions with prime residual characteristic}

\author{Michael DeBellevue}
\address{Mathematics Department, Syracuse University, Syracuse, NY 13244 U.S.A.}
\email{mpdebell@syr.edu}
\author{Josh Pollitz}
\address{Department of Mathematics,
University of Utah, Salt Lake City, UT 84112, U.S.A.}
\email{pollitz@math.utah.edu}

\date{\today}


\keywords{prime characteristic, dg algebra, acyclic closure, minimal model, homotopy Lie algebra, deviations, complete intersection, closed homomorphisms}
\subjclass[2020]{13D02, 13A35 (primary); 13D07, 16E45 (secondary)}

\begin{document}

\begin{abstract}
In this article we fix a prime integer $p$ and compare certain dg algebra resolutions over a local ring whose residue field has characteristic $p$. Namely, we show that given a closed surjective map between such algebras there is a precise description for the minimal model in terms of the acyclic closure, and that the latter is a quotient of the former. A first application is that the homotopy Lie algebra of a closed surjective map is abelian. We also use these calculations to show deviations enjoy rigidity properties which detect the (quasi-)complete intersection property.
\end{abstract}
\maketitle
\section*{Introduction}\label{introduction}
This work concerns homological properties of a surjective map of commutative noetherian local rings $\vp\colon R\rightarrow S$. 
These properties are studied through certain differential graded (from now on abbreviated to dg) algebra resolutions of $S$ over $R$, which provide a relationship between homological and ring-theoretic data of $\vp$.  
A dg  algebra resolution consists of a free resolution of $S$ over $R$ further equipped with a graded-commutative multiplication compatible with the differential; see \cref{sec:dg} and the references therein for more information. 

The prototypical example of a dg algebra resolution is the Koszul complex resolving the residue field of a regular local ring.  
Its multiplicative structure is that of the exterior algebra, and its differential maps the exterior variables bijectively to a set of minimal generators of the maximal ideal of $R$. 
When $R$ is singular the Koszul complex is not acyclic, but Tate \cite{Tate:1957} realized that a dg algebra resolution may still be obtained by adjoining additional variables to the Koszul complex.  

Tate's construction consists of minimally adjoining exterior and divided power variables in odd and even degrees, respectively.  
Its importance was demonstrated by Schoeller~\cite{Schoeller:1967} and Gulliksen~\cite{Gulliksen:1968}, who proved that this always yields a minimal resolution of the residue field. 
More generally, Tate's construction always produces a (not necessarily minimal) dg algebra resolution of $S$ over $R$ called the \emph{acyclic closure} of $\vp$, denoted $R\langle Y\rangle$ with $Y$ the set of exterior and divided power variables.

If polynomial variables are used in lieu of divided power variables one obtains the minimal models which were imported to local algebra from rational homotopy theory by Avramov~\cite{Avramov:1984}; cf.\@ \cref{c:minmodel}.
We write $R[X]$ for a minimal model for $\vp$ where $X$ is the set of exterior and polynomial variables adjoined.  
Their use has been instrumental in the detection of the complete intersection property, and showing that the gap between complete intersection rings and other singularities is vast. 
In particular, they were applied to show the Betti numbers of the residue field have polynomial growth over complete intersection rings and exponential growth in all other cases~\cite{Avramov:1984,Avramov:1999}.
More recently, minimal models were also used by Briggs \cite{Briggs:2020} to settle Vasconcelos conjecture on the conormal module~\cite{Vasconcelos:1978}. 

When the common residue field $k$ of $R$ and $S$ has characteristic zero, it is well known that the acyclic closure and minimal model of $\vp\colon R\rightarrow S$ coincide (see \cref{r:minmodelequalacycliclcosure}); in general, they can differ drastically.  The main result in this article gives a precise relationship between these two resolutions when $k$ has characteristic $p>0$ and the acyclic closure of $\vp$ is minimal as a complex of $R$-modules; such maps are called closed and were introduced in \cite{Avramov/Iyengar:2003}.

\begin{introthm}\label{introthm1}
Let $\vp\colon R\rightarrow S$ be a surjective map of local rings whose residue field $k$ has characteristic $p>0$. 
If $\vp$ is closed, then its acyclic closure $R\langle Y\rangle$ is a quotient of its minimal model $R[X]$, and there is an exact sequence of graded $k$-spaces
\[
0\to \bigoplus_{i=1}^\infty ( kY_{\rm{even}}^{(p^i)}\oplus \shift kY_{\rm{even}}^{(p^i)} )\to kX\to kY\to 0\,,
\] where $\shift$ is the usual suspension functor of a graded object and $Y_{\rm{even}}^{(p^i)}$ consists of all divided power monomials $y^{(p^i)}$ for $y\in Y_{\rm{even}}$.
\end{introthm}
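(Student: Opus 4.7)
My strategy is to construct $R[X]$ together with a surjective dg algebra map $\pi\colon R[X]\twoheadrightarrow R\langle Y\rangle$ explicitly, and then invoke uniqueness of the minimal model to identify the construction with $R[X]$. The guiding observation is that in residual characteristic $p$, the divided power algebra on an even generator $y$ is not polynomial: the elements $y^{(p^i)}$ are genuinely new, satisfying relations $\bigl(y^{(p^i)}\bigr)^p = p u_i\, y^{(p^{i+1})}$ for units $u_i\in R$. Since $p\in\m$, these relations are ``small'' and can be resolved by adjoining one odd suspension variable per relation, yielding exactly the extra generators appearing in the stated exact sequence.

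\textbf{Key divided-power identities.} First I would establish two facts in the divided power algebra on an even generator $y$ over $R$: (i) for $n=\sum_i n_i p^i$ written in base $p$, one has $\prod_i \bigl(y^{(p^i)}\bigr)^{n_i} = c_n\,y^{(n)}$ with $c_n=n!/\prod_i(p^i!)^{n_i}$ a unit in $R$; in particular (ii) $\bigl(y^{(p^i)}\bigr)^p = p u_i\,y^{(p^{i+1})}$ with $u_i$ a unit. Both follow by iterating $y^{(a)}y^{(b)} = \binom{a+b}{a}y^{(a+b)}$ and applying Legendre's formula $v_p(n!) = (n - s_p(n))/(p-1)$. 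Identity (ii) exhibits the divided power algebra as the quotient of $R[y, y^{(p)}, y^{(p^2)}, \ldots]$ by the ideal of these relations.

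\textbf{Construction.} I would build $R[X]$ and $\pi$ simultaneously by induction along the natural degree filtration of $Y$. For each $y\in Y_{\rm odd}$, adjoin an exterior variable of the same degree with lifted differential. For each $y\in Y_{\rm even}$ and each $i\ge 0$, adjoin an even polynomial variable $x^{(p^i)}$ of degree $|y|p^i$, set $\pi(x^{(p^i)}) = y^{(p^i)}$, and define $d(x^{(p^i)})$ as a lift of $d(y^{(p^i)}) = d(y)\cdot y^{(p^i-1)}$ through $\pi$ on lower-degree generators (using identity (i) to write $y^{(p^i-1)}$ polynomially in lower divided powers). For each $i\ge 1$, adjoin an odd variable $\shift x^{(p^i)}$ of degree $|y|p^i + 1$ with $\pi(\shift x^{(p^i)}) = 0$ and $d(\shift x^{(p^i)}) = \bigl(x^{(p^{i-1})}\bigr)^p - p u_{i-1}\,x^{(p^i)}$. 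Identity (ii) then makes $\pi$ a dg algebra map, and surjectivity is immediate.

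\textbf{Verification and main obstacle.} To conclude, I would verify: acyclicity of $R[X]\to S$, via a Koszul argument showing the relations $\bigl(x^{(p^{i-1})}\bigr)^p - p u_{i-1}\,x^{(p^i)}$ form a regular sequence in the polynomial subalgebra, whose Koszul complex (built from the $\shift x^{(p^i)}$) resolves $R\langle Y\rangle$, composing with $R\langle Y\rangle\to S$; and minimality of the differentials, using closedness of $\vp$ to get $d(Y)\subseteq\m R\langle Y\rangle$, hence $d(x^{(p^i)})\in\m R[X]$ via identity (i), while $d(\shift x^{(p^i)})$ is the sum of the decomposable term $\bigl(x^{(p^{i-1})}\bigr)^p$ and $p u_{i-1}\,x^{(p^i)}\in\m R[X]$ since $p\in\m$. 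Uniqueness of the minimal model then identifies this $R[X]$ with the minimal model of $\vp$, and the exact sequence follows by counting new generators. The main obstacle is executing the simultaneous induction: $d(x^{(p^i)})$ must be defined by lifting through a not-yet-completed $\pi$, requiring careful interleaving of the inductions on $i$ and on degree in $Y$, with careful bookkeeping of the units $c_n, u_i$ arising from divided-power multinomials; this is precisely where closedness of $\vp$ and the characteristic-$p$ hypothesis interact to produce a valid minimal model.
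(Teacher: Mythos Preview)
Your proposal is correct and follows essentially the same route as the paper: both build the minimal model by adjoining, for each even $y\in Y$, a tower of polynomial variables mapping to the $y^{(p^i)}$ together with exterior variables killing the relations $(x^{(p^{i-1})})^p - p\,u_{i-1}\,x^{(p^i)}$, verify that the resulting surjection $R[X]\to R\langle Y\rangle$ is a quasi-isomorphism via a Tate/Koszul regular-sequence argument, and check decomposability of the differential using the closed hypothesis together with $p\in\m$. The ``main obstacle'' you flag---the interleaved induction on $i$ and on $|y|$ with careful bookkeeping of the divided-power units---is exactly what the paper carries out explicitly in its Construction~3.1 and Theorem~3.4, where the coefficients $d_i$ play the role of your $u_i,c_n$ and the lifted differential on $x^{(p^i)}$ is written as $\tilde z\prod_{j<i}(x^{(p^j)})^{p-1}$ via your identity~(i).
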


Closed maps arise frequently. For example, large and quasi-complete intersection homomorphisms are well-studied classes of ring maps, and each belong to the class of closed maps; see \cref{c:closedexamples} for more details.  Understanding the structure of quasi-complete intersection maps is of particular importance due to their role in a long-standing conjecture of Quillen~\cite{Quillen:1970}.  The conclusion of \cref{introthm1} holds under the more technical assumption that $\vp$ is \emph{weakly-closed}, as defined in \cref{d:weaklyclosed}; cf.\@ \cref{r:question}. A refinement of \cref{introthm1} can be found in \cref{t:refinement}.

The main application of \cref{t:refinement} is a computation of the homotopy Lie algebra $\pi(F^\vp)$ of $\vp$ whenever $\vp$ is closed; this calculation is purely in terms of the acyclic closure of $\vp$.
The homotopy Lie algebra---also adopted from rational homotopy theory to local algebra by Avramov \cite{Avramov:1984}---is a graded Lie algebra naturally associated to a local homomorphism. Properties of $\pi(F^\vp)$ relate to those of $\vp$, such as whether $\vp$ is complete intersection~\cite{Avramov/Halperin:1987} or Golod~\cite{Avramov:1986}, making $\pi(F^\vp)$ a useful computational tool. For example, it was recently used by Briggs to settle a long-standing conjecture of Vasconcelos~\cite{Briggs:2020}.  See \cref{c:HLA} for more details on the homotopy Lie algebra.

\begin{introthm}\label{introthm2}
If $\vp$ is closed, then $\pi(F^\vp)$ is an abelian Lie algebra with $k$-basis dual to 
\[
\begin{cases} \shift Y & \ch k=0\\
\shift Y\cup \bigcup_{i=1}^\infty \left(\shift Y_{\rm{even}}^{(p^i)}\cup \shift^{2} Y_{\rm{even}}^{(p^i)}\right) & \ch k >0\,.
\end{cases}
\]
 Furthermore if $\ch k\neq2$, then $\pi(F^\vp)$ also has trivial reduced square. 
\end{introthm}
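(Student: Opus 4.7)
The plan is to separately establish the underlying graded $k$-vector space of $\pi(F^\vp)$ and the vanishing of the Lie bracket (and reduced square), leveraging \cref{introthm1} and its refinement \cref{t:refinement} together with the standard duality between the homotopy Lie algebra and the minimal model.

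For the graded $k$-vector space, there is a canonical identification of $\pi(F^\vp)$ with the graded $k$-dual of $\shift(kX)$ for any minimal model $R[X]$ of $\vp$; see \cref{c:HLA}. When $\ch k=0$, the minimal model agrees with the acyclic closure by \cref{r:minmodelequalacycliclcosure}, so $kX = kY$ and the basis is dual to $\shift Y$. When $\ch k=p>0$, applying $\shift$ to the short exact sequence in \cref{introthm1} and dualizing yields the advertised basis: $\shift Y$ comes from the quotient $\shift(kY)$, while the pieces $\shift Y_{\text{even}}^{(p^i)}$ and $\shift^{2} Y_{\text{even}}^{(p^i)}$ come from the once-suspended kernel.

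For the vanishing of the bracket, the Lie bracket on $\pi(F^\vp)$ is dual to the quadratic part of the differential of $R[X]$ modulo $\m_R R[X]$; equivalently, $\pi(F^\vp)$ is abelian precisely when the minimal model $R[X]$ is itself minimal as a complex of $R$-modules, that is, $d(R[X]) \subseteq \m_R R[X]$. Closedness of $\vp$ is exactly this condition for the acyclic closure $R\langle Y\rangle$, so $d(y) \in \m_R R\langle Y\rangle$ for every $y \in Y$. Using the explicit surjection $R[X] \twoheadrightarrow R\langle Y\rangle$ and kernel description supplied by \cref{t:refinement}, one is reduced to verifying that the differentials of the added $X$-variables — those accounting for the divided-power generators $y^{(p^i)}$ and their suspension partners — also land in $\m_R R[X]$. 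The Leibniz rule $d(y^{(n)}) = d(y) \cdot y^{(n-1)}$ combined with the membership $d(y) \in \m_R R\langle Y\rangle$ will be the driving inputs, together with the vanishing modulo $p$ of the binomial coefficients relating divided to ordinary powers. When $\ch k \neq 2$, the reduced square on odd elements equals $x \mapsto \tfrac{1}{2}[x,x]$, which vanishes once the Lie algebra is abelian, yielding the trivial reduced square claim simultaneously.

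The main obstacle is the explicit bookkeeping for the added $X$-variables: one must trace through the inductive construction behind \cref{t:refinement} and confirm that passing from divided powers to polynomial variables, and adjoining the suspension partners needed to make the resulting complex acyclic, never introduces $X$-quadratic terms outside $\m_R R[X]$. Everything else is essentially formal once this step is completed.
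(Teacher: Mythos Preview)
Your overall plan---read off the graded $k$-space from \cref{introthm1} and then analyze the quadratic part of the differential via \cref{t:refinement}---matches the paper's route through \cref{t:abelian} and \cref{cor:abelian}. However, there is a genuine error in how you set up the bracket step.

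You write that $\pi(F^\vp)$ is abelian ``precisely when the minimal model $R[X]$ is itself minimal as a complex of $R$-modules, that is, $d(R[X]) \subseteq \m_R R[X]$.'' This equivalence is false: abelianness only requires $\del^{[2]}=0$, whereas $d(R[X])\subseteq\m_R R[X]$ forces every $\del^{[i]}$ to vanish for $i\geqslant 2$. More to the point, the stronger condition you aim to prove is simply \emph{false} in positive characteristic. From \cref{t:refinement} one has
\[
\del(x_i'(y))=x_{i-1}(y)^{p}-p\,x_i(y),
\]
and modulo $\m_R$ this is $x_{i-1}(y)^{p}\neq 0$ in $k[X]$. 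So $R[X]$ is \emph{not} minimal as a complex of $R$-modules, and any attempt to verify $d(R[X])\subseteq \m_R R[X]$ for the added variables will fail at exactly the ``suspension partners'' $x_i'(y)$.

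What the paper actually does (and what your last paragraph gestures toward) is compute $\del^{[2]}$ directly on each family $X(0),X(i),X'(i)$. Closedness lets one choose the lifts $\tilde z$ in $\m_R A(n)$, giving $\del^{[2]}|_{X(0)}=0$; for $i\geqslant 1$ the formula $\del(x_i(y))=\tilde z\prod_{j<i}x_j(y)^{p-1}$ has $X$-degree $\geqslant 3$ modulo $\m_R$, so $\del^{[2]}(x_i(y))=0$; and $\del(x_i'(y))\equiv x_{i-1}(y)^p$ has $X$-degree $p$, hence $\del^{[2]}(x_i'(y))=0$ for $p>2$ while for $p=2$ it is the pure square $x_{i-1}(y)^2$, which contributes only to the reduced square and not to the bracket. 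Your appeal to $d(y^{(n)})=d(y)\,y^{(n-1)}$ lives in $R\langle Y\rangle$ and does not by itself control the differential on the $x_i'(y)$ in $R[X]$; you must use the explicit formulas from \cref{t:refinement}. Once you replace the minimality claim with the honest $\del^{[2]}$ computation, the argument goes through, and your reduced-square shortcut $\xi^{[2]}=\tfrac12[\xi,\xi]$ for $\ch k\neq 2$ is then a legitimate alternative to the paper's direct calculation.
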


By an abelian Lie algebra we mean a Lie algebra whose bracket is trivial. In characteristic zero, \cref{introthm2} follows directly from the equality of acyclic closure and minimal models as discussed in \cref{r:minmodelequalacycliclcosure}. In positive characteristic, \cref{introthm2} follows a fortiori from \cref{t:abelian}. 

\cref{introthm2} should be contrasted with \cite[Theorem~C]{Avramov/Halperin:1987} which shows, regardless of the residual characteristic, that when $\vp$ has finite projective dimension the only instance $\pi(F^\vp)$ is abelian is when $\vp$ is complete intersection. Hence, without the assumption $\vp$ has finite projective dimension, \cref{introthm2} provides many examples for which $\pi(F^\vp)$ is abelian and $\vp$ is not complete intersection; cf.\@ \cref{r:abelian}.
 
An initial motivation for this project was to better understand certain homological invariants of quasi-complete intersection maps.  
\cref{t:abelian} calculates the homotopy Lie algebra of a quasi-complete intersection map, showing that it exhibits the following dichotomy:

\begin{introthm}
\label{introthm3}Let $\vp$ be a quasi-complete intersection map with residue field $k$, and set $U=\pi^2(F^\vp)$ and $V=\pi^3(F^\vp)$.  There is an isomorphism of abelian Lie algebras
\[\pi(F^\vp)\cong 
\begin{cases}
  U\oplus V & \ch k = 0\\
  U\oplus V \oplus \bigoplus_{t=1}^\infty\shift^{-2p^t+2}V\oplus\bigoplus_{t=1}^\infty\shift^{-2p^t+1}V & \ch k>0
\end{cases}
\]
and the reduced square operation is given by:
  \begin{enumerate}
    \item If $p\neq2$, the reduced square is trivial on $\pi(F^\vp)$.
    \item If $p=2$, then there is a basis $\mathcal{B}$ of $V$ for which the reduced square on $\bigoplus_{t=1}^\infty\shift^{-2p^t+2}V$ is determined by 
    \[\shift^{-2^t+2}b\mapsto \shift^{-2^{t+1}+1} b\] for $b\in \mathcal{B}$ and extended according to the axioms of a reduced square.
    The reduced square is trivial otherwise.
  \end{enumerate}
\end{introthm}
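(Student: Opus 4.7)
The plan is to deduce the claim from \cref{introthm2}, specializing to the rigid structure of the acyclic closure of a quasi-complete intersection map, and then to handle the reduced square by hand.

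First I would recall that every quasi-complete intersection map $\vp$ is closed (see \cref{c:closedexamples}), so \cref{introthm2} applies. The defining feature of a quasi-complete intersection map is that the variables $Y$ in its acyclic closure are concentrated in degrees $1$ and $2$; equivalently, the deviations $\ve_n(\vp)$ vanish for $n\ge 4$. In particular, $Y_{\rm{even}}=Y_2$ and $\dim_k U=\#Y_1$, $\dim_k V=\#Y_2$, with $U$ and $V$ having bases dual to $\shift Y_1$ and $\shift Y_2$ respectively.

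Next I would read off the direct sum decomposition from the basis description in \cref{introthm2}. In characteristic zero the basis is just $\shift Y=\shift Y_1\sqcup\shift Y_2$, yielding $U\oplus V$ directly. In positive characteristic $p$, the extra basis elements are indexed by $\shift Y_2^{(p^t)}$ (in degree $2p^t+1$) and $\shift^2 Y_2^{(p^t)}$ (in degree $2p^t+2$) for $t\ge1$; each set has cardinality $\#Y_2=\dim_k V$. Matching dimensions and degrees, these pieces are identified with $\shift^{-2p^t+2}V$ and $\shift^{-2p^t+1}V$ respectively, giving the stated decomposition. Abelianness of the Lie bracket is inherited verbatim from \cref{introthm2}.

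For the reduced square, the case $p\ne 2$ is immediate: \cref{introthm2} already asserts triviality when $\ch k\ne 2$. The substantive content is the $p=2$ case. Here I would choose $\mathcal{B}\subseteq V$ as the basis dual to a chosen set of divided power variables in $Y_2$, so that under the identification above the basis element $\shift^{-2^t+2}b$ corresponds to (a suspension of) the class dual to $y^{(2^t)}$ for the corresponding $y\in Y_2$. My plan is then to compute the reduced square on such a class using an explicit cochain-level formula for the squaring on the homotopy Lie algebra, expressed in terms of the dg algebra structure on the acyclic closure. The key combinatorial input is how the multiplication on $R\langle Y\rangle$ interacts with the divided power relations, and the main obstacle will be carrying out this translation cleanly---in particular, controlling signs and divided-power coefficients precisely enough to match the dual class of $y^{(2^t)}$ with an iterated reduced square of the dual class of $y$. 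Once in place, the displayed formula $\shift^{-2^t+2}b\mapsto\shift^{-2^{t+1}+1}b$ follows, and linearity of the reduced square on an abelian restricted Lie algebra over $\mathbb{F}_2$ extends it uniquely from $\mathcal{B}$ to all of $\pi(F^\vp)$.
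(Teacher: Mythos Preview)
Your reduction to \cref{introthm2} for the vector-space decomposition and abelianness is exactly what the paper does (more precisely, the paper deduces everything from \cref{t:abelian}, of which \cref{introthm2} is a corollary), and your identification of the graded pieces with shifts of $V$ is correct.

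The gap is in the $p=2$ reduced-square computation. \cref{introthm2} says nothing about the reduced square when $\ch k=2$, so you need an additional input, and your plan to extract it ``in terms of the dg algebra structure on the acyclic closure'' is aimed at the wrong object. By definition (\cref{c:HLA}) the reduced square on $\pi(F^\vp)$ is read off from the quadratic part $\del^{[2]}$ of the differential on the \emph{minimal model} $R[X]$; the acyclic closure $R\langle Y\rangle$ does not carry the variables whose boundaries produce the squares. Concretely, the classes dual to $\shift^{2}Y_{\rm even}^{(p^i)}$ are not represented by anything in $R\langle Y\rangle$---they correspond to the extra exterior variables $x_i'(y)$ that appear only in $R[X]$---so no amount of divided-power combinatorics in $R\langle Y\rangle$ will exhibit the relation you need.

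The paper's route is to invoke the explicit minimal model of \cref{t:refinement}: there $\del(x_{i+1}'(y))=x_i(y)^{p}-p\,x_{i+1}(y)$, so when $p=2$ one has $\del^{[2]}(x_{i+1}'(y))=x_i(y)^{2}$, and the recipe in \cref{c:HLA} immediately gives $(\shift x_i(y)^{*})^{[2]}=(\shift x_{i+1}'(y))^{*}$. Translating through the bijections $x_i(y)\leftrightarrow y^{(2^i)}$ and $x_{i+1}'(y)\leftrightarrow \shift y^{(2^{i+1})}$ yields the displayed formula. Replacing your acyclic-closure plan with this one-line minimal-model calculation closes the argument.
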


Suitably interpreted, a converse to \cref{introthm3} holds when $\vp$ is assumed to be weakly-closed; see \cref{cor:qci}.  This provides a homological invariant distinguishing quasi-complete intersection maps.  

A final application of the results above, given in \cref{cor:ci}, establishes a characterization of the complete intersection property in terms of the sequence of deviations $\{\ve_i(\vp)\}$ of $\vp$.  These record the $k$-vector space dimensions of the graded pieces of $\pi(F^\vp)$.  When $\vp$ is a map of finite projective dimension, much work has been done to relate vanishing $\ve_i(\vp)$ for large $i$ to the complete intersection property~\cite{Avramov:1999,Avramov/Halperin:1987,Avramov/Iyengar:2003,Gulliksen:1974,Halperin/1987};  a central motivation for these works was to resolve Quillen's conjecture, which claims all such maps must be complete intersection homomorphisms.  The following theorem notably lacks the requirement that $\vp$ has finite projective dimension, a necessary property of prior rigidity results.

\begin{introthm}\label{introthm4}
  Let $\vp\colon R\rightarrow S $ be a closed local homomorphism with residual characteristic $p>0$.
  The following are equivalent:
  \begin{enumerate}
      \item $\vp$ is complete intersection;
      \item $\ve_i(\vp)=0$ for $i\gg0$;
      \item $\ve_i(\vp)=0$ for $i=2p^t+1$ or $i=2p^t+2$ for some $t\geqslant 1$.
  \end{enumerate}
\end{introthm}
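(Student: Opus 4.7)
The plan is to prove $(1) \Rightarrow (2) \Rightarrow (3) \Rightarrow (1)$; the first implication is immediate from \cref{introthm2}, the second is tautological, and all of the work lies in the last.

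For $(1) \Rightarrow (2)$, when $\vp$ is complete intersection its acyclic closure is the Koszul complex on a regular sequence generating $\ker\vp$, so $Y = Y_1$ and $Y_{\rm even} = \emptyset$. The basis in \cref{introthm2} then collapses to $\shift Y_1$, concentrated in degree two, so $\ve_n(\vp) = 0$ for every $n \neq 2$ and in particular for $n \gg 0$. The implication $(2) \Rightarrow (3)$ follows by choosing $t \geqslant 1$ large enough that $2p^t + 1$ lies in the vanishing range of (2).

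The crux is $(3) \Rightarrow (1)$, which hinges on extracting explicit deviation formulas from \cref{introthm2}. Counting basis elements of $\pi^{2p^t+1}(F^\vp)$: the component $\shift Y$ contributes $|Y_{2p^t}| = |Y_{\rm even, 2p^t}|$ (since $2p^t$ is even), the component $\shift Y_{\rm even}^{(p^i)}$ contributes $|Y_{\rm even, 2p^{t-i}}|$ for $1 \leqslant i \leqslant t$, and $\shift^{2} Y_{\rm even}^{(p^i)}$ contributes nothing since $2p^t - 1$ is odd. This yields
\[
\ve_{2p^t+1}(\vp) = \sum_{j=0}^{t} |Y_{\rm even, 2p^j}|\,.
\]
A parallel count in degree $2p^t + 2$, in which only $\shift Y_{\rm odd}$ and the $\shift^{2} Y_{\rm even}^{(p^i)}$ pieces survive the parity constraint, gives
\[
\ve_{2p^t+2}(\vp) = |Y_{\rm odd, 2p^t+1}| + \sum_{j=0}^{t-1} |Y_{\rm even, 2p^j}|\,.
\]
In each expression the $j = 0$ summand $|Y_{\rm even, 2}|$ appears (using $t \geqslant 1$), so the hypothesis forces $|Y_2| = 0$.

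To finish I would invoke the closed hypothesis: minimality of $R\langle Y\rangle$ as an $R$-complex identifies $|Y_2|$ with the minimal number of generators of $\H_1(\f; R)$, where $\f$ is the minimal generating set of $\ker\vp$ associated to $Y_1$. Nakayama then forces $\H_1(\f; R) = 0$, and by the classical Koszul criterion $\f$ is a regular sequence; hence $\vp$ is complete intersection. The one delicate step is the bookkeeping in the deviation formulas above; once those are in hand, everything else is formal and appeals only to \cref{introthm2} and standard Koszul theory.
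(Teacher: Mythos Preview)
Your argument is correct and essentially mirrors the paper's: the paper records the same deviation formulas in \cref{c:decomposable_deviations} (derived there from \cref{t:comparison} rather than \cref{introthm2}, but these are equivalent counts), observes that $|Y_2|=\gdev_3(\vp)$ appears as a summand in both $\ve_{2p^t+1}(\vp)$ and $\ve_{2p^t+2}(\vp)$, and concludes via the Koszul criterion. One small imprecision: the identification of $|Y_2|$ with the minimal number of generators of $\H_1(\f;R)$ comes already from the construction of the acyclic closure (the $\Gamma$-decomposability built into \cref{c:acyclicclosure}), not from the closed hypothesis; closedness is only needed earlier so that \cref{introthm2} applies and your deviation formulas hold.
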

\begin{ack}
The authors thank Benjamin Briggs, Srikanth Iyengar, and Alexandra Seceleanu for helpful comments. They also thank an anonymous referee for a careful reading and their comments. The first author also thanks Mark Walker for funding through NSF grant DMS 1901848. The second author was partially supported by NSF grants DMS 1840190 and DMS 2002173.
\end{ack}
\section{Minimal models and acyclic closures}\label{sec:dg}
Throughout $\vp\colon R\to S$ is a surjective map of commutative noetherian local rings with common residue field $k$, and $A=\{A_i\}_{i\geqslant 0}$ will be a dg $R$-algebra. 

In this article, an arbitrary element $x$ of a graded object $X=\{X_i\}_{i\in \mathbb{Z}}$ will be implicitly taken to be homogeneous and $|x|$ will denote that integer $i$ so that $x\in X_i$.  All dg algebras will be strictly graded-commutative in the sense that $ab=(-1)^{|a||b|}ba$ for all $a,b$ in $A$ and $a^2=0$ when $|a|$ is odd. The defining property making $A$ a \emph{dg}
algebra is that it is equipped with a degree $-1$ differential $\del$ satisfying that Leibniz rule: 
\[
\del(ab)=\del(a)b+(-1)^{|a|}a\del(b)\,.
\] Said differently, $(A,\del)$ is a complex of $R$-modules where the multiplication map $A\otimes_R A\to A$ and algebra unit $R\to A$  are morphisms of complexes. 
\begin{chunk}\label{c:ind}Suppose $A_0$ is the local ring $(R,\m)$. If $\del(A_1)\subseteq \m$, then  the maximal dg ideal $\m_A$
 of $A$ is given by 
\[
(\m_A)_i\coloneqq\begin{cases} \m & i=0 \\
A_i & i>0\\
0 & i<0
\end{cases}\,.
\]This gives rise to the complex of $k$-spaces
\[
\ind A\coloneqq \dfrac{\m_A}{\m +\m_A^2}\,
\] which is called the indecomposable complex of $A$. 
\end{chunk}

\begin{chunk}\label{c:semifree}
We write $A[X]$ to denote a semifree extension of $A$ on the graded set of variables $X=X_1,X_2,X_3,\ldots$ as defined in \cite[Section 2.1]{Avramov:2010}.
That is, each variable of $X_i$ has homological degree $i$: When $i$ is even $X_i$ consists of polynomial variables and when $i$ is odd $X_i$ consists of exterior variables. In particular, $A[X]^\natural$ is the free strictly graded-commutative $A$-algebra on $X$ where $(-)^\natural$ forgets the differential of a differential graded object. Note that when $A=R$, the underlying graded $k$-space of $\ind (R[X])$ is $kX;$ cf.\@ \cref{c:ind}.  
\end{chunk}

\begin{chunk}\label{c:minmodel}
A minimal model for $\vp$ is a factorization of $\vp$ as $
R\to R[X]\xra{\simeq} S$ where $\ind(R[X])$ has zero differential. Note that $\ind(R[X])$ has zero differential if and only if the differential of $R[X]$ is decomposable in the sense that $\del(R[X])$ is contained in $\m _RR[X]+(X)^2.$ By \cite[Section~7.2]{Avramov:2010}, minimal models always exist and are unique up to an isomorphism of dg algebras. Their construction begins with the Koszul complex $R[X_1]$ with the set of exterior variables $X_1$ mapping bijectively to a minimal set of generators for $\ker \vp$.   The construction proceeds inductively by adjoining to the dg algebra $R[X_{\leqslant i-1}]$ a set of variables $X_i$ mapping bijectively to a minimal generating set for the homology module ${\rm H}_{i-1} (R[X_{\leqslant i-1}])$; to guarantee that the direct limit of this procedure $R[X]$ is a semifree extension, the sets $X_i$ alternate between polynomial and exterior variables as described in \cref{c:semifree}.
We will slightly abuse terminology and call $R[X]$ the minimal model for $\vp$.
\end{chunk}

\begin{chunk}
\label{c:classiclifting}
Fix a semifree extension $A[X]$ of $A$. It is well known that $A[X]$ enjoys the following lifting property: given a dg $A$-algebra map $f\colon A[X]\to C$ and a surjective quasi-isomorphism of dg $A$-algebras $g\colon B\xra{\simeq} C$, then there exists a dg $A$-algebra map $\tilde{f}\colon  A[X]\to B$, extending $f$, with $g\tilde{f}=f$ that is unique up to $A$-linear homotopy; see, for example, \cite[Proposition~2.1.9]{Avramov:2010}. As a consequence, whenever $R[X]$ is a minimal model of $\vp$ and $B\xra{\simeq} S$ is a dg $R$-algebra resolution, there exists a quasi-isomorphism $R[X]\xra{\simeq}B$ compatible with the augmentation maps to $S$. 
\end{chunk}

Our analysis in \cref{s:comparison} relies on the following general lifting property for semifree extensions; its proof is implicitly contained in the proof of \cite[Proposition~2.1.9]{Avramov:2010}. We sketch the argument below.

\begin{lemma}
\label{l:lifting}
Let $\alpha\colon A\rightarrow B$ be a morphism of dg algebras, and $Z$ a set of cycles in $A$ whose image $\alpha(Z)$ is a set of boundaries in $B$. If $\{b_z: z\in Z\}$ is a collection of elements of $B$ with $\alpha(z)=\del(b_z)$, then there exists a unique map of dg algebras, extending $\alpha$, \[\tilde{\alpha}: A[X\,|\,\del(x_z)=z]\to B \ \text{ with } \ \tilde{\alpha}(x_z)=b_z\]
where $X=\{x_z: |x_z|=|z|+1\}_{z\in Z}$.
\end{lemma}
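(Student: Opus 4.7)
The plan is to build $\tilde\alpha$ first as a morphism of graded $A$-algebras $A[X]^\natural\to B^\natural$ using the universal property of the free strictly graded-commutative $A$-algebra on $X$, and then to verify separately that the resulting map commutes with the differentials.

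Uniqueness is immediate: any $A$-algebra morphism out of $A[X]^\natural$ is determined by its values on $X$, so prescribing $\tilde\alpha|_A=\alpha$ and $\tilde\alpha(x_z)=b_z$ pins $\tilde\alpha$ down on the underlying graded algebra. For existence as a graded $A$-algebra map, I would invoke the universal property recalled in \cref{c:semifree}: each $b_z$ lies in degree $|z|+1=|x_z|$, and $B$ is itself strictly graded-commutative, so the assignment $x_z\mapsto b_z$ respects both the grading and the defining relations (including $x^2=0$ when $|x|$ is odd); hence the map extends uniquely to $A[X]^\natural$.

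The substantive step is to check that this $\tilde\alpha$ is a chain map. My plan is to study the degree $-1$ $A$-linear map
\[
\vp\coloneqq \del_B\circ\tilde\alpha - \tilde\alpha\circ\del_{A[X]}\colon A[X]\longrightarrow B\,.
\]
A short computation with the Leibniz rules for $\del_B$ and $\del_{A[X]}$ shows that $\vp$ is an $\tilde\alpha$-derivation, in the sense that $\vp(ab)=\vp(a)\tilde\alpha(b)+(-1)^{|a|}\tilde\alpha(a)\vp(b)$. Moreover, $\vp$ vanishes on $A$ because $\alpha$ is a morphism of dg algebras, and it vanishes on each $x_z$ because $\del_B(b_z)=\alpha(z)=\tilde\alpha(\del x_z)$ by the defining choice of $b_z$. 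Since $A\cup X$ generates $A[X]$ as an $A$-algebra, and any $\tilde\alpha$-derivation is determined by its values on algebra generators (by an elementary induction on monomial length), $\vp$ vanishes identically, so $\tilde\alpha$ is a morphism of dg algebras with the required properties.

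The entire argument is routine bookkeeping; the only step requiring mild care is the sign computation verifying the derivation identity for $\vp$, but no genuine obstacle arises.
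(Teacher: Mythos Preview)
Your proof is correct and follows essentially the same approach as the paper: build the graded $A$-algebra map via freeness of $A[X]^\natural$, then verify it commutes with the differentials. The paper reduces to a single variable and checks this by direct computation on $\sum a_i x^i$, whereas you phrase the same verification as the vanishing of the $\tilde\alpha$-derivation $\del_B\circ\tilde\alpha-\tilde\alpha\circ\del_{A[X]}$ on generators; this is a cosmetic difference, not a genuinely different route.
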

\begin{proof}
By induction it suffices to check when $Z=\{z\}$, in which case we write $b$ for $b_z$ and $x$ for $x_z.$
As $A[x\mid \del x=z]^\natural$ is free in the category of graded-commutative algebras there is a uniquely defined graded algebra map $\tilde{f}\colon A[x\mid \del x=z]\to B$ with $\tilde{f}|_A=f$ and $x\mapsto b$. It remains to show $\tilde{f}$ commutes with the differentials: 
\begin{align*}
\del^B\tilde{f}\left(\sum a_i x^i\right)
&=\sum \del^Bf(a_i)b^i+(-1)^{|a_i|}f(a_i)\del^B(b^i) \\
&=\sum \del^Bf(a_i)b^i+(-1)^{|a_i|}if(a_i)f(z)b^{i-1} \\
&=\sum f\del^A(a_i)b^i+(-1)^{|a_i|}if(a_i)f(z)\tilde{f}(x)^{i-1} \\
&=\tilde{f}\left(\sum \del^A(a_i)b^i+(-1)^{|a_i|}ia_izx^{i-1}\right) \\
&=\tilde{f}\del^A\left(\sum a_ix^i\right)\,;
\end{align*}the equalities are all evident, and so $\tilde{f}$ is the desired (unique) dg algebra extension of $f$ mapping $x$ to $z$.
\end{proof}

\begin{discussion}\label{d:polyvsdivpowers}
Let $A[x]$ be a semifree extension of $A$ on a single variable of even degree $x$, and suppose $n$ is a positive integer. The element $\del(x)x^{n-1}$ is always a cycle in $A[x]$. It is a boundary if and only if $n$ is invertible in $A_0$, in which case the image of $\frac1nx^n$ under the differential is $\del(x)x^{n-1}$. Tate \cite{Tate:1957} realized that the dependence on the invertiblility of $n$ can be eliminated by adjoining $x$ as a \emph{divided power variable}, as recalled in the sequel.
\end{discussion}

\begin{chunk}\label{c:dividedpower}
We write $A\langle Y\rangle$ for a semifree extension obtained by adjoining a graded set of divided power variables $Y=Y_1,Y_2,Y_3,\ldots$ to $A$; see \cite[Section 6.1]{Avramov:2010} as well as \cite[Chapter~1]{Gulliksen/Levin:1969}. We refer to the variables of $Y$ as $\Gamma$-variables, and to $A\langle Y\rangle$ as a semifree $\Gamma$-extension of $A$. Analogous to \cref{c:semifree}, each $\Gamma$-variable of $Y_i$ has homological degree $i$ with $Y_i$ consisting of divided power variables when $i$ is even, and exterior variables when $i$ is odd.
Recall that when $y\in Y_{\rm{even}}$, its set of divided powers $\{y^{(i)}:|y^{(i)}|=|y|i\}_{i\geqslant0}$ with $y^{(0)}=1$ and $y^{(1)}=y$ satisfy the equalities 
\[
y^{(n)}y^{(m)}=\binom{n+m}{n}y^{(n+m)} \ \text{ and } \ \del(y^{(n)})=\del(y)y^{(n-1)}\,.
\]
These fundamental equations are crucial to the analysis in the next section. 
\end{chunk}
\begin{chunk}
Consider a semifree $\Gamma$-extension $A\langle Y\rangle$. 
We may well-order $Y$ first by homological degree and then by ordering each set $Y_n$. Associated to $A\langle Y\rangle$, with this choice of ordering, is a canonical $A$-linear basis called the \emph{normal $\Gamma$-monomials}; this basis consists of $1$ together with the set of terms of the form \begin{equation}\label{eq:gammamonomial}y_{\lambda_1}^{(i_1)}\cdots y_{\lambda_n}^{(i_n)}\end{equation}
with $y_{\lambda_1}<\ldots<y_{\lambda_n}$ and each $i_j$ is a positive integer, along with the additional constraint that $i_j=1$ when $y_{\lambda_j}$ is of odd degree; see \cite[Section 6]{Avramov:2010} for further details. 

Suppose $A$ is the local ring $R$ with maximal ideal $\m$ and residue field $k$. Let $\m_{R\langle Y\rangle}^{(2)}$ denote the ideal of $R\langle Y\rangle$ generated by all normal $\Gamma$-monomials \cref{eq:gammamonomial} with $i_1+\ldots+i_n\geqslant 2.$ The complex of $\Gamma$-indecomposables of $A\langle Y\rangle$ is the complex of $k$-spaces 
\[
\gind R\langle Y\rangle \coloneqq \dfrac{\m_{R\langle Y\rangle}}{ \m R\langle Y\rangle+\m_{R\langle Y\rangle}^{(2)}}\,;
\]
it is clear that as a graded $k$-vector space $\gind R\langle Y\rangle$ is simply $kY.$
\end{chunk}

\begin{chunk}\label{c:acyclicclosure} 
An acyclic closure for $\vp$ is a factorization of $\vp$ as 
\(
R\to R\langle Y\rangle\xra{\simeq} S
\) where $\gind(R\langle Y\rangle)$ has zero differential. Acyclic closures may be constructed inductively by adjoining $\Gamma$-variables to minimally kill cycles generating the homology modules ${\rm H}_n(R\langle Y_{\leqslant n}))$, as originally described by Tate~\cite{Tate:1957} (see also \cite{Avramov:2010,Gulliksen/Levin:1969}). The procedure is identical to the construction process for minimal models described in \cref{c:minmodel}, except that divided power variables are used in place of polynomial variables.  Uniqueness, up to an isomorphism of dg $\Gamma$-algebras, is contained in \cite[Theorem~1.9.5]{Gulliksen/Levin:1969}. As with minimal models, by a slight abuse of terminology $R\langle Y\rangle$ is referred to as an acyclic closure for $\vp$. 
\end{chunk}

We end this section with a discussion of a property of $\vp$ which allows for the comparison of dg algebra resolutions performed in \cref{s:comparison}.

\begin{definition}\label{d:weaklyclosed}
 We say $\vp$ is \emph{weakly-closed} provided that an acyclic closure $R\langle Y\rangle$ of $\vp$ has decomposable differential; that is, 
 \[
 \del(R\langle Y\rangle) \subseteq \m_R R\langle Y\rangle +(Y)^2\,.
 \]
\end{definition}

\begin{remark}
Let $R\langle Y\rangle$ be a semifree $\Gamma$-extension of $R$.  In analogy to \cref{d:weaklyclosed}, the containment 
\[
 \del(R\langle Y\rangle) \subseteq \m_R R\langle Y\rangle +\m_{R\langle Y\rangle}^{(2)}\,
 \]
is called $\Gamma$-decomposability of the differential. It implies that the differential of $\gind R\langle Y\rangle$ is zero, and if $R\langle Y\rangle$ is a resolution of $S$, it is equivalent to $R\langle Y \rangle$ being an acyclic closure; see, for example, \cite[Lemma~6.3.2]{Avramov:2010}.

This classical notion is similar to the one introduced in \cref{d:weaklyclosed}, but they are distinct.  
A precise description of their difference is provided by \cref{l:comparingindecomposables} and \cref{r:decomposability}.
\end{remark}

\begin{remark}
As any two acyclic closures of $\vp$ are isomorphic as local dg algebras (in fact, as local dg $\Gamma$-algebras) the property of $\vp$ being weakly-closed is independent of choice of acyclic closure. If $\mathbb{Q}\subseteq R$, then $\vp$ is trivially weakly-closed. Indeed, in this case $\m_{R\langle Y\rangle}^{(2)}=(Y)^2$; see for example \cref{r:minmodelequalacycliclcosure}. The previous remark therefore guarantees  $\vp$ is weakly-closed.

Nontrivial examples of such maps---regardless of characteristic considerations---are closed maps, defined in the sequel. See also the discussion in \cref{r:question}.
\end{remark}

\begin{chunk}\label{c:closedexamples}
Following \cite[1.3]{Avramov/Iyengar:2005}, we say $\vp$ is closed if some (equivalently, every) acyclic closure $R\langle Y\rangle$ of $\vp$ is minimal as a complex of $R$-modules. That is, $\del(R\langle Y\rangle)\subseteq \m_R R\langle Y\rangle$. Clearly any closed homomorphism is weakly-closed. Examples of closed morphisms are plentiful:
\begin{enumerate}
 \item The augmentation map to the residue field is closed. This is the content of a celebrated theorem of Gulliksen and Schoeller~\cite{Gulliksen:1968,Schoeller:1967}
 \item More generally, any large homomorphism---that is, $\vp$ satisfying that the induced map on Tor algebras 
 \( \Tor^R(k,k)\to \Tor^S(k,k)
 \)
 is surjective---is closed by a theorem of Avramov and Rahbar-Rochandel; see
 \cite[Theorem~2.5]{Levin:1980}. A prominent example of a family of large homomorphisms is provided by algebra retracts, in the sense that there exists a ring map from $S$ to $R$ which is right inverse to $\vp$. 
 \item Quasi-complete intersection maps, defined and studied by Avramov, Henriques and \c{S}ega, are closed; cf.\@ \cite[1.6]{Avramov/Henriques/Sega:2013}. Recall $\vp$ is quasi-complete intersection if in the acyclic closure $R\langle Y\rangle$ of $\vp$ we have $Y_i=\emptyset$ for all $i\geqslant 3.$  An elementary example is when $\vp$ is a complete intersection homomorphism---in the usual sense that the kernel of $\vp$ is generated by an $R$-regular sequence---in which case $Y_2=\emptyset$.  The augmentation $R\rightarrow k$, when $R$ is a complete intersection ring, is an additional example of a quasi-complete intersection;  when $R$ is singular the augmentation is not a complete intersection~\cite{Tate:1957}.  An additional concrete example is the quotient $R\rightarrow R/(r)$ when $r$ is an exact zero divisor as introduced in \cite{Henriques/Sega:2011}.
\end{enumerate}
\end{chunk}

\begin{remark}\label{r:quillen}
Quasi-complete intersection maps are central to a conjecture due to Quillen on the cotangent complex~\cite{Quillen:1970}. The latter has prompted numerous investigations into the former; see, for example, \cite{Avramov/Henriques/Sega:2013,Bergh/Celikbas/Jorgensen:2014,Henriques/Sega:2011,Kustin/Sega:2018,Lutz:2016,Majadas/Rodicio:2010,Rodicio:1992,Windle:2017}.
\cref{introthm3} from the introduction, as well as \cref{cor:qci}, are two of the main results of this article as they reveal further structural restrictions on quasi-complete intersection maps. We hope these insights can lead to  progress towards settling Quillen's conjecture, as analogous restrictions for complete intersection maps were fundamental for resolving an analogous conjecture of Quillen and understanding these maps more generally.
\end{remark}

\section{Divided powers in prime residual characteristic}
This section contains various facts involving divided power algebras and binomial coefficients that are needed for our analysis in \cref{s:comparison}.
Foundational material on divided power algebras is contained in \cite{Cartan:1956,Roby:1980}, which includes some of the formulas in this section; we provide a self-contained treatment below.
Throughout this section $p$ will be a fixed prime number. 
\begin{chunk}\label{c:lucas}
Recall that every integer $n$ may be written uniquely in its base $p$ expansion as \[n_0p^0+n_1p^1+\dots+n_lp^l\]
where each $n_i$ is a nonnegative integer strictly less than $p$; in this case we write $n=[n_0n_1\dots n_l]_p$. We use $\equiv_p$ to denote equivalence of integers modulo $p$. 

In the notation above, we recall the following classical theorem of Lucas~\cite{Lucas:1878}: \emph{If $n=[n_0\dots n_l]_p$ and $m=[m_0\dots m_l]_p$, then} \[\binom{n}{m}\equiv_p\binom{n_0}{m_0}\cdots\binom{n_l}{m_l}.\]
In applying this theorem, note the convention that $\binom{n}{m}=0$ whenever $m>n$.
\end{chunk}

\begin{notation}\label{c:coefficient}
Let $y$ be a divided power variable over a ring $R$. Using \cref{c:dividedpower} it follows easily that for any positive integer partition of $n=n_0+n_1+\ldots+n_l$, there is the equality \[y^{(n_0)}\cdots y^{(n_l)}=\binom{n}{n_0,n_1,\dots,n_l}y^{(n)} \ \text{ where } \ \binom{n}{n_0,n_1,\dots,n_l}=\dfrac{n!}{n_0!n_1!\cdots n_l!}\,.\] 

Define the following nonnegative integers \[b_{t,m}=\binom{tp^m}{\,\underbrace{p^m,p^m,\dots,p^m}_{t\text{-times}}}\ \text{ and } \ c_n\coloneqq\binom{n}{n_0,n_1p,\dots,n_lp^l} \] for each $m,n,t\in \mathbb{N},$ where $n=[n_0\dots n_l]_p.$ These arise as the coefficients in the following multiplications: 
\begin{align*}
 (y^{(p^m)})^{t}&=b_{t,m}y^{(tp^m)}\\
 y^{(n_0)}y^{(n_1p)}\cdots y^{(n_lp^l)}&=c_ny^{(n)} \, \text{ where } \, n=[n_0\dots n_l]_p\,.
\end{align*}

The coefficient $b_{p-1,m}$ arises sufficiently often that we will denote it $b_{m}$ for simplicity.

\end{notation}
\begin{lemma}\label{l:dividedproduct}\label{l:dividedpower}
Adopting \cref{c:coefficient}, there are equivalences 
\begin{center}
\begin{tabular}{c c c c c c c}
 $b_{t,m}\equiv_p t!$ & & & $b_{m}\equiv_p -1$ & & & $c_n\equiv_p 1$
\end{tabular}
\end{center}
for any nonnegative integers $m$ and $n$ and for $t<p$. In particular, if $R$ has residual charactersitic $p$, then $b_{t,m}$, $b_m$, and $c_n$ are units in $R$.
\end{lemma}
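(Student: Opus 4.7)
The plan is to prove each of the three congruences by expressing the multinomial coefficients as products of binomial coefficients and then applying Lucas' theorem (\cref{c:lucas}) to each factor. The workhorse identity is
\[
\binom{N}{a_0,a_1,\dots,a_l}=\prod_{k=0}^{l}\binom{N-\sum_{i<k}a_i}{a_k},
\]
which will reduce both claims about $b_{t,m}$ and about $c_n$ to an almost trivial digit-by-digit calculation, since in every case both the numerator and the denominator that appear will have a very sparse base-$p$ expansion.

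First I would attack $b_{t,m}$ by writing $b_{t,m}=\prod_{j=2}^{t}\binom{jp^m}{p^m}$. Since $j\leqslant t<p$, the base-$p$ expansion of $jp^m$ is just the digit $j$ in position $m$, while $p^m$ has only a $1$ in position $m$; Lucas' theorem therefore collapses the $j$th factor to $\binom{j}{1}=j$, and multiplying over $j=2,\dots,t$ yields $b_{t,m}\equiv_p t!$. The assertion $b_m\equiv_p-1$ then falls out as the special case $t=p-1$ combined with Wilson's theorem $(p-1)!\equiv_p-1$.

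For $c_n$ the same identity, applied with $a_k=n_kp^k$, produces a product whose $k$th factor is $\binom{n-n_0-n_1p-\dots-n_{k-1}p^{k-1}}{n_kp^k}$. Reading off base-$p$ digits, the numerator equals $[0,\dots,0,n_k,n_{k+1},\dots,n_l]_p$ and the denominator equals $[0,\dots,0,n_k,0,\dots,0]_p$, so Lucas' theorem pins every factor (and hence $c_n$ itself) to $1$ modulo $p$. The ``In particular'' clause is then automatic: in a local ring of residual characteristic $p$ an integer is a unit precisely when it is coprime to $p$, and each of $t!$ (with $t<p$), $-1$, and $1$ is visibly so. There is no real obstacle in this plan; the only minor care required is the index bookkeeping in unfolding the multinomial for $c_n$ so that Lucas is applied to the correct digits.
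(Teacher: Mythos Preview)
Your proposal is correct and follows essentially the same approach as the paper: both factor the multinomial coefficients into binomial coefficients and apply Lucas' theorem together with Wilson's theorem. The only cosmetic difference is that for $c_n$ the paper peels off the top digit inductively, writing $c_n=c_{n'}\binom{n}{n_lp^l}$ with $n'=[n_0\dots n_{l-1}]_p$, whereas you unfold the entire product at once; the underlying digit-by-digit Lucas computation is identical.
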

\begin{proof}
For the first equivalence
\[b_{t,m}=\binom{p^m}{p^m}\binom{2p^m}{p^m}\cdots\binom{tp^m}{p^m}\equiv_p t!\]
where the equivalence follows from Lucas' theorem, recalled in \cref{c:lucas}. Wilson's theorem~\cite{Hardy/Wright:2008} says $(p-1)!\equiv_p-1$, giving the second equivalence. 

For the third equivalence, we induct on $l$; the base case, when $l=0$, is trivial. Next assume $l>0$ and set $n'=[n_0\dots n_{l-1}]_p$. Observe that 
\[   
c_n=c_{n'}\binom{n}{n_lp^l}\equiv_p1
\]
where the first equality uses $n = n'+n_l p^l$ and the equivalence holds by Lucas' theorem and induction.  
\end{proof}
\begin{remark}
The precise formulas for the coefficients specified in \cref{c:coefficient} are primarily needed in \cref{s:comparison} to handle the case when $R$ has mixed characteristic.  When $R$ itself has characteristic $p$, the arguments in the proof of \cref{l:dividedproduct} show that $b_m=-1$ and $c_n=1$. That is, \[(y^{(p^m)})^{p-1}=-y^{((p-1)p^m)} \ \text{ and } \ y^{(n)}=y^{(n_0)}y^{(n_1p)}\cdots y^{(n_lp^l)}\,\] where $n=[n_0\dots n_l]_p$.
\end{remark}

\begin{notation}
 Suppose $(R,\m,k)$ is local and $k$ has characteristic $p>0.$ For any semifree $\Gamma$-extension $R\langle Y\rangle$ of $R$, we let
$kY^{(p^\infty)}$ denote the subcomplex of $\ind R\langle Y\rangle$ consisting of cycles of the form $y^{(p^i)}$ with $y\in Y_{\rm{even}}$ and $i>0.$
\end{notation}

\begin{lemma}\label{l:comparingindecomposables}
 Suppose $(R,\m,k)$ is local and $k$ has characteristic $p>0.$ For any semifree $\Gamma$-extension $R\langle Y\rangle$ of $R$,
 there is an exact sequence of complexes of $k$-spaces
 \[0\rightarrow kY^{(p^\infty)}\to \ind R\langle Y\rangle\rightarrow \gind R\langle Y\rangle \rightarrow 0\,.\]
\end{lemma}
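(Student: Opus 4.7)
The plan is to establish the short exact sequence degree-wise as $k$-vector spaces, then observe that all maps commute with the differentials. The surjection $\ind R\langle Y\rangle \twoheadrightarrow \gind R\langle Y\rangle$ is induced by the containment $\m + \m_{R\langle Y\rangle}^{2} \subseteq \m R\langle Y\rangle + \m_{R\langle Y\rangle}^{(2)}$, and is a chain map since both differentials descend from that of $R\langle Y\rangle$. The inclusion $kY^{(p^\infty)} \hookrightarrow \ind R\langle Y\rangle$ is a chain map, the source having zero differential because $\del(y^{(p^i)}) = \del(y)\,y^{(p^i - 1)}$ lies in $\m_{R\langle Y\rangle}^2$ for $i \geq 1$. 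What remains is to identify the kernel of $\ind \to \gind$ with the image of $kY^{(p^\infty)}$.

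First I would show the kernel is spanned by the classes of $\{y^{(p^m)} : y \in Y_{\rm{even}},\ m \geq 1\}$. Since $\m R\langle Y\rangle \subseteq \m + \m_{R\langle Y\rangle}^2$, the kernel equals the image of $\m_{R\langle Y\rangle}^{(2)}$ in $\ind R\langle Y\rangle$, which is $R$-spanned by the normal $\Gamma$-monomials $y_{\lambda_1}^{(i_1)} \cdots y_{\lambda_n}^{(i_n)}$ with $\sum i_j \geq 2$. Any such monomial with $n \geq 2$ factors as $y_{\lambda_1}^{(i_1)} \cdot \bigl(y_{\lambda_2}^{(i_2)} \cdots y_{\lambda_n}^{(i_n)}\bigr)$, a product of two positive-degree elements, and so lies in $\m_{R\langle Y\rangle}^2$. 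For a single-variable monomial $y^{(i)}$ with $i = [n_0, \ldots, n_l]_p$ and $i \geq 2$, \cref{l:dividedpower} yields
\[
y^{(i)} = c_i^{-1} \prod_{j=0}^l b_{n_j, j}^{-1}\bigl(y^{(p^j)}\bigr)^{n_j},
\]
with all coefficients units in $R$. When $i$ is not a power of $p$ then $\sum n_j \geq 2$, so $y^{(i)}$ is a unit multiple of a product of at least two positive-degree factors, again in $\m_{R\langle Y\rangle}^2$. Only the classes of $y^{(p^m)}$ for $m \geq 1$ and $y$ even can survive.

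Next I would prove these classes are linearly independent. Working in $R\langle Y\rangle \otimes_R k$, which is free as a $k$-module on the normal $\Gamma$-monomials, the image of $\m_{R\langle Y\rangle}^2$ is $k$-spanned by products $\mu_1 \mu_2$ of positive-degree normal $\Gamma$-monomials. Examining the coefficient of $y^{(p^m)}$ ($y$ even, $m \geq 1$) in such a product, expanded in the normal basis: if $\mu_1$ or $\mu_2$ involves a variable other than $y$ then so does $\mu_1 \mu_2$, forcing a zero coefficient; otherwise $\mu_1 = y^{(a)}$ and $\mu_2 = y^{(b)}$, and a nonzero $y^{(p^m)}$-coefficient requires $a + b = p^m$ with $0 < a < p^m$, giving coefficient $\binom{p^m}{a} \equiv_p 0$ by Lucas' theorem (\cref{c:lucas}). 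Hence no nontrivial $k$-linear combination of the $y^{(p^m)}$'s lies in the image of $\m_{R\langle Y\rangle}^2$, establishing linear independence and the exactness of the sequence.

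The main obstacle is the linear independence step, which reduces entirely to the divisibility $p \mid \binom{p^m}{a}$ for $0 < a < p^m$---a clean consequence of Lucas' theorem. The remaining content is bookkeeping with the normal $\Gamma$-monomial basis and the multiplication formulas from \cref{l:dividedpower}.
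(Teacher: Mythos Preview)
Your proof is correct and follows essentially the same approach as the paper: both establish the surjection via the containment $\m + \m_A^2 \subseteq \m A + \m_A^{(2)}$, reduce the kernel computation to single-variable divided powers $y^{(n)}$, show $y^{(n)}\in\m_A^2$ when $n$ is not a $p$-power via the multiplication formulas of \cref{l:dividedpower}, and use Lucas' theorem to show the $y^{(p^m)}$-coefficient of anything in $\m_A^2$ vanishes modulo $p$. Your treatment is in fact slightly more careful than the paper's in two places---you explicitly verify the maps are chain maps, and in the linear-independence step you dispose of products $\mu_1\mu_2$ involving variables other than $y$ before reducing to the single-variable case, a point the paper leaves implicit.
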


\begin{proof}
Set $A=R\langle Y\rangle$. The containment of subcomplexes 
\begin{equation}
\label{e:containment}\m+\m_A^2=\m A +\m_A^2\subseteq \m A+\m_{A}^{(2)}\end{equation} induces a surjection $ \ind A\rightarrow \gind A.$
It suffices to examine when $y^{(n)}$ is zero in $\ind A$ with $y\in Y_{\textrm{even}}$, and $n> 1$ with $n=[n_0\dots n_l]_p$.

When $n$ is not a power of $p$, there are two cases. First if at least two $n_i$ are nonzero, then using \cref{c:coefficient} and \cref{l:dividedpower}, we obtain the equality
\[
y^{(n)}=c_n^{-1}y^{(n_0)}\ldots y^{(n_lp^l)}\,.
\] The second case is that $n_i=0$ for $l>i$ and $n_l>1$. Here 
\[
y^{(n)}=y^{(n_lp^l)}=b_{n_l,l}^{-1}(y^{(p^l)})^{n_l}
\] where the second equality again uses \cref{c:coefficient} and \cref{l:dividedpower}. Hence, in either case, we have shown $y^{(n)}$ is zero in $\ind A$.

Now we show that when $n$ is a power of $p$, say $p^i$, then $y^{(n)}$ is not zero in $\ind A.$ Assuming to contrary, in $A\otimes_R k$ one can write $y^{(n)}$ as a $k$-linear combination of $\Gamma$-monomials of the form
\[
y^{(i_1)}\cdots y^{(i_t)}
\] with $i_1+\ldots +i_t=n$ and $t>1.$
Set $m=i_1+\dots+i_{t-1}$. Using \cref{l:dividedproduct}, we have \[y^{(i_1)}\cdots y^{(i_t)}=\binom{m}{n_1,\dots,n_{t-1}}\binom{n}{i_t}y^{(p^i)}\] and since $t\geqslant 1$, it follows that $i_t<n$. Observe that $\binom{n}{i_t}\equiv_p0$ by Lucas' theorem. As a consequence 
$y^{(i_1)}\cdots y^{(i_t)}$ equals zero in $A\otimes k$. 
\end{proof}

\begin{remark}\label{r:decomposability}
A simple consequence of \cref{l:comparingindecomposables}, that is easy to see independently, is that decomposability of the differential of $R\langle Y\rangle$ implies $\Gamma$-decomposability of its differential. The main content of \cref{l:comparingindecomposables} is that the converse holds if no summand of the form $y^{(p^i)}$, for $y$ in $Y_{\textrm{even}}$ and positive integer $i$, appears as a summand in the differential of any element of $R\langle Y\rangle.$
\end{remark}

\section{Comparison map}\label{s:comparison}
Throughout, we fix a surjective homomorphism of commutative local noetherian rings $\vp\colon R \to S$ with common residue field $k$. Let $R[X]\xra{\simeq} S$ be a minimal model for $\vp$ and let $R\langle Y\rangle\xra{\simeq} S$ denote an acyclic closure for $\vp$. 
\begin{chunk}
By \cref{c:classiclifting}, there exists a quasi-isomorphism of dg $R$-algebras $R[X]\xra{\simeq} R\langle Y\rangle$ which is unique up to homotopy and compatible with the augmentations to $S$. In this section we construct an explicit representative of this map, which will denoted by $\comp^\vp$ and referred to as the \emph{comparison map of }$\vp$. As $\comp^\vp$ is a morphism of local dg algebras, along with an analogous containment to \cref{e:containment}, the comparison map induces a map on complexes of $k$-spaces 
\[
\ind(\comp^\vp)\colon \ind (R[X])\to \gind( R\langle Y\rangle)\,.
\]
\end{chunk}
\begin{remark}\label{r:minmodelequalacycliclcosure}
When the characteristic of $k$ is further assumed to be zero, it is then standard that $R[X]$ and $R\langle Y\rangle$ are isomorphic, and easy to see that $\comp^\vp$ is itself an isomorphism of dg algebras which induces the isomorphism $\ind(\comp^\vp)$.  

Indeed, if $R$ is local and $\ch k =0$, then $R$ contains a copy of $\mathbb{Q}$. In this setting, for any cycle $z$ in a dg $R$-algebra $A$, the extensions $A[x\,|\,\del(x)=z]$ and $A\langle x\,|\,\del(x)=z\rangle$ coincide when $|x|$ is odd. When $|x|$ is even,  the map $A[x\,|\,\del(x)=z]\to A\langle x\,|\,\del(x)=z\rangle$ given by \[\sum_{i\geq 0}a_ix^i\mapsto \sum_{i\geq 0}i!a_ix^{(i)}\] establishes an isomorphism of dg $R$-algebras; the fact this map is invertible is the only place that $\mathbb{Q}\subseteq R$ is used.  Thus $R[X]$ and $R\langle Y\rangle$ are each a direct limit of isomorphic extensions and $\comp^\gamma$ is the direct limit of these isomorphisms.

The goal of this section is to provide insight on the situation when $k$ has prime characteristic $p>0$, which we assume for the remainder of the section.
\end{remark} 

In this section, we prove \cref{introthm1} which is recast, in the notation set above, in \cref{t:comparison}. It is an immediate consequence of \cref{t:refinement} presented at the end of the section. Recall that for a graded object $V$, the $i$-fold suspension of $V$ is $\shift^iV$ where $(\shift^i V)_{j}=V_{j-i}.$  

\begin{corollary}\label{t:comparison}
Suppose $\vp\colon R\to S$ is a surjective map of local rings whose common residue field $k$ has characteristic $p>0$. If $\vp$ is weakly-closed, then the comparison map $\comp^\vp$ is surjective and induces the following exact sequence of graded $k$-spaces 
 \[
 0\to kY^{(p^\infty)}\oplus\shift kY^{(p^\infty)} \to \ind (R[X])\xra{\ind(\comp^\vp)} \gind (R\langle Y\rangle)\to 0.\qed
 \]
\end{corollary}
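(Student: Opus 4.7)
The plan is to construct the comparison map $\comp^\vp$ and a minimal model $R[X]$ of $\vp$ simultaneously by induction on degree, then read both conclusions off the resulting explicit description of $X$.  At stage $n$ I would populate $X_n$ with three types of variables corresponding to the three summands of the claimed kernel.  Type (i): for each $y\in Y_n$, a variable $x_y$ of the same parity as $|y|$ with $\comp^\vp(x_y)=y$ and $\del(x_y)$ chosen to lift $\comp^\vp(\del y)$.  Type (ii): for each $y\in Y_{\rm{even}}$ with $|y|p^i=n$ and $i\geqslant 1$, a polynomial variable $x_y^{(p^i)}$ with $\comp^\vp(x_y^{(p^i)})=y^{(p^i)}$; this is required because the polynomial power $x_y^{p^i}$ maps to $y^{p^i}=(p^i)!\,y^{(p^i)}$, whose leading coefficient $(p^i)!$ lies in $\m_R$.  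Type (iii): for each $y\in Y_{\rm{even}}$ with $|y|p^i+1=n$, an exterior variable $\xi_y^{(p^i)}$ chosen to kill a cycle built out of $x_y^{p^i}$ and $x_y^{(p^i)}$ so as to maintain acyclicity of $R[X]$.  The differentials of type (ii) and (iii) variables are written down using the multiplicative formulas in \cref{c:coefficient} combined with \cref{l:dividedpower}; the lifting property in \cref{l:lifting} makes these definitions compatible, and the weak-closedness hypothesis excludes further sources of new variables.

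Granting this construction, surjectivity of $\comp^\vp$ is immediate.  Its image is an $R$-subalgebra of $R\langle Y\rangle$ containing every $y\in Y$ (via type (i)) and every $y^{(p^j)}$ for $y\in Y_{\rm{even}}$, $j\geqslant 1$ (via type (ii)).  By \cref{c:coefficient}, for each $y\in Y_{\rm{even}}$ and each positive integer $n$ with base-$p$ expansion $[n_0\dots n_l]_p$ there is an equality
\[
 y^{(n)}=c_n^{-1}\prod_{j=0}^l b_{n_j,j}^{-1}(y^{(p^j)})^{n_j}\,,
\]
where the constants $c_n^{-1}$ and $b_{n_j,j}^{-1}$ are units in $R$ by \cref{l:dividedpower}.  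Hence the image contains every normal $\Gamma$-monomial, and so is all of $R\langle Y\rangle$.

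To compute $\ker\ind(\comp^\vp)$, I would track how each type of variable maps.  Type (i) variables map bijectively onto the basis $Y$ of $\gind R\langle Y\rangle=kY$, accounting for the surjection.  Type (ii) variables map to $y^{(p^i)}$, a normal $\Gamma$-monomial of total divided-power weight $p^i\geqslant 2$, hence lying in $\m_{R\langle Y\rangle}^{(2)}$ and vanishing in $\gind R\langle Y\rangle$.  Type (iii) variables, by their construction, have images supported in the subalgebra generated by strictly lower-degree $\Gamma$-monomials and thus also in $\m_R R\langle Y\rangle+\m_{R\langle Y\rangle}^{(2)}$.  These contributions give exactly the kernel $kY^{(p^\infty)}\oplus \shift kY^{(p^\infty)}$, with type (ii) variables yielding the first summand and type (iii) yielding the second.

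The main obstacle is the inductive construction in the first step, specifically showing that these three families of variables already produce a minimal model.  This amounts to a careful analysis of which cycles in $R[X_{<n}]$ fail to be boundaries at each stage, relying on the arithmetic in \cref{l:dividedpower} to pinpoint exactly when binomial coefficients vanish modulo $\m_R$, and on the weak-closedness hypothesis to rule out other sources of cycles.  I expect this to be the content of the refinement \cref{t:refinement} alluded to at the end of the section, from which the present corollary then follows by the bookkeeping above.
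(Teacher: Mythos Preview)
Your proposal is correct and follows essentially the same path as the paper: the corollary is deduced there as an immediate consequence of \cref{t:refinement}, which supplies exactly the three-type variable description you outline (denoted $X(0)$, $X(i)$, $X'(i)$ in the paper), and your surjectivity and kernel bookkeeping match the paper's reasoning. The only cosmetic difference is that in the paper's explicit map one has $\comp^\vp(x_i(y))=d_iy^{(p^i)}$ for a unit $d_i$ and $\comp^\vp(x_i'(y))=0$, rather than the values you wrote, but this does not affect the argument.
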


\begin{construction}\label{const:comparison}
Let $A$ be a dg $R$-algebra and $z\in A_{\rm{odd}}$ be a cycle. By \cref{l:lifting}, there is a canonical map of dg $A$-algebras
\[
\comp\colon A[x\,|\,\del x=z]\to A\langle y\,|\,\del y=z\rangle
\] completely determined by $x\mapsto y$. 
We define a semifree extension \[A[x]\hookrightarrow A[\{x_i,x_{i+1}'\}_{i\geqslant 0}]\] and a map $\comp_z(A): A[\{x_i,x_{i+1}'\}_{i\geqslant 0}]\to A\langle y\rangle$ of dg $A$-algebras extending $\comp$. Recall the coefficients $b_{t,m}$ and $c_n$ defined in \cref{c:coefficient}.  For notational convenience, set \begin{equation}
\label{eq:di}
d_{i} = c_{p^i-1}\prod_{j=0}^{i-1}(b_jc_{p^j-1}^{p-1})^{p^{i-1-j}}\,.
\end{equation}

Set $x_0\coloneqq x$ and by induction assume we have constructed $A[\{x_i,x_{i+1}'\}_{0\leqslant i\leqslant n}]$, denoted $A(n)$,
with differential determined by \[\del(x_i)=zx_0^{p-1} x_1^{p-1}\ldots x_{i-1}^{p-1} \ \text{ and } \ \del(x_i')=x_{i-1}^{p}-px_i,
\] and a dg algebra map $\comp_z(n): A(n)\to A\langle y\rangle$ determined by $\comp_z(n)(x_0)=y$ and for all $i\geqslant 1$: 
\[\comp_z(n)(x_i)=d_iy^{(p^i)} \ \text{ and } \ \comp_z(n)(x_i')=0\,.\]

By \cref{l:dividedpower}, each $d_i$ is a product of units and hence is a unit of $R$.
A tedious direct calculation, using \cref{l:dividedpower}, yields 
\[
\comp_z(n)(zx_0^{p-1}\cdots x_n^{p-1})=d_{n+1}\del(y^{(p^{n+1})})\,.
\]

 Also, as $z$ has odd degree, it follows that $zx_0^{p-1} x_1^{p-1}\ldots x_{n}^{p-1}$ is a cycle of degree $(|x|p^{n+1}-1)$.
Therefore by \cref{l:lifting}, there is a unique dg $A$-algebra map 
\begin{equation}\label{inductivemap}
A(n)[x_{n+1}\,|\,\del x_{n+1}=zx_0^{p-1}x_1^{p-1}\ldots x_n^{p-1}]\to A\langle y\rangle
\end{equation}
extending $\comp_z(n)$ with $x_{n+1}\mapsto d_{n+1}y^{(p^{n+1})}.$

Under this extension the image of $x_{n}^{p}-px_{n+1}$ is a cycle of degree $|x|p^{n+1}$. Using the definitions of $d_n$ and $c_i$, the image of this cycle under \cref{inductivemap} is 
\begin{align*}
 d_n^p(y^{(p^{n})})^{p}-pd_{n+1}y^{(p^{n+1})}&= \left(b_nd_n^p\binom{p^{n+1}}{p^n}-pd_{n+1}\right)y^{(p^{n+1})}\\
 &=\left(\binom{p^{n+1}}{p^n}\frac{c_{p^n-1}}{c_{p^{n+1}-1}}-p\right)d_{n+1}y^{(p^{n+1})}\\
 &=0\,.
\end{align*}
 Hence $\comp_z(n)$ can be further extended, again applying \cref{l:lifting}, to a dg $A$-algebra map 
$\comp_z(n+1): A(n+1)\to A\langle y\rangle$
 where
\[A(n+1)\coloneqq A[\{x_i,x_{i+1}'\}_{0\leqslant i\leqslant n+1}\,|\,\del x_{i+1}=zx_0^{p-1}\ldots x_i^{p-1}, \ \del x_{i+1}'=x_i^p-px_{i+1}]
\] with $\comp_z(n+1)(x_i)=d_iy^{(p^i)}$ and $\comp_z(n+1)(x_i')=0$ for each $i$, which completes the induction. 

Now taking the colimit of the maps $ \comp_z(n)$, we obtain the dg $A$-algebra map $\comp_z(A): A[\{x_i,x_{i+1}'\}_{i\geqslant 0}]\to A\langle y\rangle$ extending $\comp$ with \[\comp_z(A)(x_i)=d_iy^{(p^i)} \ \text{ and } \comp_z(A)(x_i')=0\,.\]
\end{construction}

\begin{lemma}\label{l:adjoineven}
In the notation of \cref{const:comparison}, the dg $A$-algebra map \[\comp_z(A): A[\{x_i,x_{i+1}'\}_{i\geqslant 0}]\to A\langle y\rangle \] is a quasi-isomorphism. 
\end{lemma}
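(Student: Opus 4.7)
My plan is a stepwise filtered-colimit argument matched to the tower in \cref{const:comparison}. Writing $T\coloneqq A[\{x_i,x_{i+1}'\}_{i\geqslant 0}]=\colim_n A(n)$ and $\comp_z(A)=\colim_n\comp_z(n)$, and using that homology commutes with filtered colimits of chain complexes, it suffices to exhibit a compatible filtration on $A\langle y\rangle$ together with inductive quasi-isomorphisms at each stage. The natural choice is to set $F_n\subseteq A\langle y\rangle$ to be the $A$-subcomplex generated by $\{y^{(m)}:0\leqslant m<p^{n+1}\}$, which is indeed a subcomplex since $\del y^{(m)}=zy^{(m-1)}$, and note that $A\langle y\rangle=\bigcup_n F_n$.

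I would then prove by induction on $n$ that $\comp_z(n)$ induces a quasi-isomorphism onto $F_n$ (or a close variant thereof). The inductive step leverages the two key identities established in \cref{const:comparison}: the adjunction of $x_{n+1}$ kills the cycle $zx_0^{p-1}\cdots x_n^{p-1}$, which maps to $d_{n+1}\del(y^{(p^{n+1})})$, matching the appearance of the new generator $y^{(p^{n+1})}$ in $F_{n+1}$; and the adjunction of $x_{n+1}'$ kills the residual cycle $x_n^p-px_{n+1}$, which maps to zero in $A\langle y\rangle$ and reflects the automatic relation $(y^{(p^n)})^p=b_n y^{(p^{n+1})}$ built into the divided power structure. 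Each adjunction is a standard semifree extension killing a cycle whose mapping cone is acyclic via \cref{l:lifting}, so the inductive quasi-isomorphism persists through both steps.

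The main obstacle is formalizing how these two adjunctions, taken together, match the extension of $F_n$ to $F_{n+1}$ up to quasi-isomorphism. Careful bookkeeping of the unit coefficients $d_i$, $b_{t,m}$, and $c_n$ from \cref{l:dividedpower}---each a unit precisely because $k$ has characteristic $p$---is essential, as is handling the ``spillover'' phenomenon whereby elements such as $x_0^{p^{n+1}}\in A(n)$ have images outside $F_n$ (scaled by non-unit integers like $(p^{n+1})!$ appearing via $y^n = n!\,y^{(n)}$). Absorbing these effects is where the $\Gamma$-algebraic identities of Section 2, particularly \cref{l:dividedpower}, play their decisive role in identifying the correct matched subcomplex at each stage and verifying acyclicity of the relevant mapping cones.
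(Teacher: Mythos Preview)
Your plan has a genuine gap at precisely the point you label an ``obstacle.'' The map $\comp_z(n)$ does not land in your $F_n$: in mixed characteristic, $x_0^{p^{n+1}}\in A(n)$ maps to $(p^{n+1})!\,y^{(p^{n+1})}\neq 0$, which lies outside $F_n$. Hence there is no map $A(n)\to F_n$ to which an inductive hypothesis can apply, and your parenthetical ``or a close variant thereof'' is carrying the entire weight of the argument without being discharged. Even granting a corrected target, the inductive step is not the cone comparison you describe: adjoining the polynomial variable $x_{n+1}$ contributes infinitely many $A$-module generators $x_{n+1}^j$, while the passage $F_n\subset F_{n+1}$ adds only finitely many $y^{(m)}$. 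The discrepancy is absorbed exactly by the relation $x_n^p-px_{n+1}$, but nothing in your outline explains how a stage-by-stage comparison witnesses this. (Note also that \cref{l:lifting} only asserts existence and uniqueness of an extension; it says nothing about acyclicity of a mapping cone.)

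The paper sidesteps the filtration entirely. Since each $x_i^p-px_{i+1}$ is a nonzerodivisor and is the boundary of $x_{i+1}'$, Tate's theorem applied iteratively gives a quasi-isomorphism
\[
A[\{x_i,x_{i+1}'\}_{i\geqslant 0}]\xrightarrow{\ \simeq\ }A[\{x_i\}_{i\geqslant 0}]\big/(\{x_i^p-px_{i+1}\}_{i\geqslant 0})\,.
\]
On this quotient the monomials $x_0^{n_0}\cdots x_l^{n_l}$ with $0\leqslant n_j<p$ form an $A$-basis, and the induced map to $A\langle y\rangle$ sends each such monomial to a unit multiple of $y^{(n)}$ for $n=[n_0\ldots n_l]_p$, the unit computed via \cref{l:dividedpower}. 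Thus the induced map is an \emph{isomorphism} of dg $A$-algebras, not merely a quasi-isomorphism. The key idea you are missing is to impose the relations $x_i^p-px_{i+1}$ globally and pass to the quotient, rather than track them through a tower.
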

\begin{proof}
As the regular element $x_i^p-px_{i+1}$ is the boundary of the corresponding $x_{i+1}'$, by iteratively applying \cite[Theorem~3]{Tate:1957} it follows that the canonical map 
\[
A[\{x_i,x_{i+1}'\}_{i\geqslant 0}]\xra{\simeq}A[\{x_i\}_{i\geqslant 0}]/(\{x_i^p-px_{i+1}\}_{i\geqslant 0})
\] is a quasi-isomorphism. 
Next observe that $\comp_z(A)$ factors through the canonically induced map 
\[
\overline{\comp}: A[\{x_i\}_{i\geqslant 0}]/(\{x_i^p-px_{i+1}\}_{i\geqslant 0})\to A\langle y\rangle
\] with $x_i\mapsto d_iy^{(p^i)}$ for each $i\geqslant 0$; cf.\@ \cref{const:comparison}\cref{eq:di} for the definition of $d_i$ which is defined in terms of the coefficients introduced in \cref{c:coefficient}. 

Let $n$ be a nonnegative integer and write $n=[n_0n_1\ldots n_l]_p$. Observe that 
\begin{align*}
 \overline{\comp}(x_{0}^{n_0}x_1^{n_1}\cdots x_{l}^{n_l})&=\prod_{i=1}^l (d_{i}y^{(p^i)} )^{n_i}\\
 &=\prod_{i=1}^l d_{i}^{n_i}b_{n_i,i}y^{(n_ip^i)} \\
 &=\left(\prod_{i=1}^l d_{i}^{n_i}b_{n_i,i}\right)c_ny^{(n)}
\end{align*}the first equality holds as $\overline{\comp}$ is an algebra map and the other equalities use \cref{c:coefficient}. Also, an $A$-linear basis for $A[\{x_i\}_{i\geqslant 0}]/(\{x_i^p-px_{i+1}\}_{i\geqslant 0})$
is \[\{
x_{0}^{n_0}x_1^{n_1}\cdots x_{l}^{n_l}: 0\leqslant n_i<p
\}\] and it is standard that an $A$-linear basis for $A\langle y\rangle$ is $\{y^{(n)}\}_{n\geqslant 0}$, the divided powers of $y$. By \cref{l:dividedpower}, the nonnegative integers \[\left(\prod_{i=1}^l d_{i}^{n_i}b_{n_i,i}\right)c_n\]
are units in $R$ and so $\overline{\comp}$ is an isomorphism of dg $A$-algebras, which finishes the proof of the lemma once recalling $\comp_z(A)$ factors as 
\[
A[\{x_i,x_{i+1}'\}_{i\geqslant 0}]\xra{\simeq}A[\{x_i\}_{i\geqslant 0}]/(\{x_i^p-px_{i+1}\}_{i\geqslant 0})\xra{\overline{\comp}} A\langle y\rangle.\qedhere
\]
\end{proof}

\begin{theorem}
\label{t:refinement}
Suppose $\vp\colon R\to S$ is a surjective morphism of local rings of residual characteristic $p>0$. If $\vp$ is weakly-closed, then the minimal model $R[X]$ for $\vp$ has the form 
\[
 R[X(0),X(1),X'(1),X(2),X'(2),\ldots ]
\] where 
\begin{enumerate}
 \item $X(0)=\{x_0(y): y\in Y\}$ with $|x_0(y)|=|y|$,
 \item for $i\geqslant 1$, $X(i)=\{x_i(y): y\in Y_{\rm{even}}\}$ with $|x_i(y)|=|y|p^i$,
 \item for $i\geqslant 1$, $X'(i)=\{x_i'(y): y\in Y_{\rm{even}}\}$ with $|x_i'(y)|=|y|p^i+1$,
\end{enumerate} 
and there exists a surjective quasi-isomorphism $\comp^\vp\colon R[X]\rightarrow R\langle Y\rangle$ determined by the formulas
\[
\comp^\vp(x_i(y))=d_iy^{(p^i)} \ \text{ and }\comp^\vp(x_i'(y))=0
\]
where $d_i$ is the unit defined in \cref{const:comparison}\cref{eq:di}.
Furthermore, the differential of $R[X]$ is given by 
\[
 \del(x_{i}(y))=\tilde{z}\prod_{j=0}^{i-1}x_{j}(y)^{p-1} \ \text{ and } \ 
 \del(x_{i}'(y))=x_{i-1}(y)^{p}-px_{i}(y)
\]
where $\tilde{z}$ is a cycle lifting the cycle $\del(y)$ along $\comp^\vp$.
\begin{proof}
Assume, by induction, we have constructed a surjective quasi-isomorphism of dg $R$-algebras 
\[
\comp(n): R[X(0)_{\leqslant n}, X(1)_{\leqslant n}, X'(1)_{\leqslant n},\ldots ]\to R\langle Y_{\leqslant n}\rangle
\] with the desired properties for some $n\geqslant 0$, and let $A(n)$ denote the source of this map.

Let $z$ be a cycle in $R\langle Y_{\leqslant n}\rangle$ of homological degree $n$. Since $\comp(n)$ is a surjective quasi-isomorphism, there exists a cycle $\tilde{z}$ in $A(n)$ with $\comp(n)(\tilde{z})=z.$ Furthermore, by \cref{r:decomposability} the assumption that $\vp$ is weakly-closed guarantees no summand of $z$ is of the form $y^{(p^i)}$ for $y\in Y_{\text{even}}$, which implies that $\tilde{z}$ can be chosen to be in $\m_R A(n) +\m_{A(n)}^2$.

When $n$ is even, $\comp(n)$ extends to the quasi-isomorphism of dg $R$-algebras
\[
A(n)\langle x \,|\,\del x=\tilde{z}\rangle \xra{\simeq} R\langle Y_{\leqslant n}\rangle \langle y\,|\,\del y=z\rangle;
\] cf.\@ \cite[Lemma~7.2.10]{Avramov:2010}. As $x$ is an exterior variable, $A(n)[x\,|\,\del x = \tilde{z}]$ and $A(n)\langle x\rangle$ coincide. 

Now assume $n$ is odd. In this case we obtain a surjective quasi-isomorphism 
\begin{equation}
\label{firstquism}
A(n)[\{x_i,x'_{i+1}\}_{i\geqslant 0}] \xra{\simeq} A(n)\langle y\,|\,\del y=\tilde{z}\rangle,
\end{equation} with $\del x_i=\tilde{z}x_0^{p-1}\ldots x_{i-1}^{p-1}$ and $\del x_{i+1}'=x_i^{p-1}-px_{i+1}$, using \cref{l:adjoineven}. Furthermore, another application of \cite[Lemma~7.2.10]{Avramov:2010}, extends $\comp(n)$ to a surjective quasi-isomorphism 
\begin{equation}\label{secondquism}
A(n)\langle y\,|\,\del y=\tilde{z}\rangle\xra{\simeq} R\langle Y_{\leqslant n}\rangle \langle y\,|\,\del y=z\rangle.
\end{equation} Composing the surjective quasi-isomorphisms from \cref{firstquism} and \cref{secondquism} yield another one 
\[
A(n)[\{x_i,x'_{i+1}\}_{i\geqslant 0}] \xra{\simeq} R\langle Y_{\leqslant n}\rangle \langle y\,|\,\del y=z\rangle.
\] Repeating this for each cycle of degree $n$, extends $\comp(n)$ to a surjective quasi-isomorphism 
\[
\comp(n+1): R[X(0)_{\leqslant n+1}, X(1)_{\leqslant n+1}, X'(1)_{\leqslant n+1},\ldots ]\to R\langle Y_{\leqslant {n+1}}\rangle.
\] Taking the colimit of these maps yields the desired surjective quasi-isomorphism 
\[
\comp^\vp\colon R[X(0),X(1),X'(1),X(2),X'(2),\ldots ]\xra{\simeq} R\langle Y\rangle 
\] satisfying all of the desired properties. 
\end{proof}
\end{theorem}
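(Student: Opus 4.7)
The plan is to prove the theorem by induction on homological degree $n$, building an increasing family of surjective quasi-isomorphisms $\comp(n)\colon A(n)\to R\langle Y_{\leqslant n}\rangle$ whose colimit is the desired comparison map. Here $A(n)$ will be the semifree extension of $R$ on the variables $X(i)_{\leqslant n},X'(i)_{\leqslant n}$ described in the statement, with differentials and $\comp(n)$ given by the stated formulas. The base case is trivial (take $A(0)=R$), and the acyclic closure/minimal model constructions both proceed by adjoining variables of degree $n+1$ to kill cycles of degree $n$, so it suffices to show how to extend $\comp(n)$ to $\comp(n+1)$ one $y\in Y$ of degree $n+1$ at a time.

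For each such $y$, set $z=\del y$, a cycle of degree $n$ in $R\langle Y_{\leqslant n}\rangle$. By surjectivity of $\comp(n)$ we may lift $z$ to a cycle $\tilde z\in A(n)$, and the weakly-closed hypothesis (via \cref{r:decomposability} and \cref{l:comparingindecomposables}) ensures $z$ is decomposable in the sense $z\in\m_R R\langle Y_{\leqslant n}\rangle+(Y_{\leqslant n})^2$, so that $\tilde z$ can be chosen in $\m_R A(n)+\m_{A(n)}^2$---this is crucial for the minimality of the resulting model $R[X]$. The construction then splits on the parity of $n+1$: if $n+1$ is odd, $y$ is an exterior $\Gamma$-variable, which coincides with a polynomial/exterior extension, so we simply adjoin $x_0(y)$ of degree $n+1$ with $\del x_0(y)=\tilde z$ and extend $\comp(n)$ by $x_0(y)\mapsto y$ using \cref{l:lifting}. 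If $n+1$ is even, $y$ is a divided power variable: we apply \cref{const:comparison} with $A=A(n)$ and cycle $\tilde z$ to adjoin the tower $\{x_i(y),x_{i+1}'(y)\}_{i\geqslant 0}$, and by \cref{l:adjoineven} the induced map $\comp_{\tilde z}(A(n))$ is a quasi-isomorphism onto $A(n)\langle y\rangle$. We then compose with the canonical quasi-isomorphism $A(n)\langle y\rangle\to R\langle Y_{\leqslant n}\rangle\langle y\rangle$ obtained by extending $\comp(n)$ to the new $\Gamma$-variable via the standard lifting property for $\Gamma$-extensions.

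Minimality of the resulting $R[X]$ is a direct check from the stated differentials: $\del x_i(y)=\tilde z\prod_{j<i}x_j(y)^{p-1}$ lies in $\m_R R[X]+(X)^2$ since $\tilde z$ is decomposable and (for $i\geqslant 1$) the product contributes at least one $X$-factor; meanwhile $\del x_i'(y)=x_{i-1}(y)^p-px_i(y)$ decomposes as $(X)^2+\m_R R[X]$. Surjectivity of $\comp^\vp$ is clear at each stage since the images $d_iy^{(p^i)}$ are unit multiples of generators, and all $y^{(n)}$ arise as unit multiples of monomials in the $y^{(p^i)}$ by \cref{l:dividedpower}. The main obstacle I expect is the decomposable-lifting step: a naive factorwise lift of $z$ produces an element with the correct image but whose differential lies in the acyclic kernel of $\comp(n)$, and the correction must itself be decomposable to preserve the minimal-model property. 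Controlling this correction is precisely where the weakly-closed hypothesis is indispensable and where one must argue carefully, likely by an auxiliary induction ensuring the kernel of $\comp(n)$ admits decomposable contracting homotopies on its decomposable part.
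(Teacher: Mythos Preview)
Your proposal is correct and takes essentially the same route as the paper: induct on homological degree, lift each $z=\partial y$ to a cycle $\tilde z$ along the already-built surjective quasi-isomorphism $\gamma(n)$, adjoin a single exterior variable when $|y|$ is odd, and for $|y|$ even apply \cref{const:comparison}/\cref{l:adjoineven} to produce the tower $\{x_i(y),x'_{i+1}(y)\}$ and then compose with the $\Gamma$-extension of $\gamma(n)$ furnished by \cite[Lemma~7.2.10]{Avramov:2010}. Your explicit verification that the resulting differentials are decomposable and that $\gamma^\varphi$ is surjective are points the paper leaves to the phrase ``satisfying all of the desired properties.''

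On the obstacle you flag: the paper handles the decomposable-lifting step exactly as in your second paragraph---it asserts in one sentence that, because weakly-closedness forbids $y^{(p^i)}$-summands in $z$, the cycle lift $\tilde z$ ``can be chosen'' in $\mathfrak m_R A(n)+\mathfrak m_{A(n)}^2$, and then proceeds. It does not carry out the auxiliary argument you sketch about decomposable corrections in $\ker\gamma(n)$. So your concern is not a deficiency of your proposal relative to the paper; if anything, you are more explicit about where the weakly-closed hypothesis is doing work. One caution: your suggested fix (a contracting homotopy on $\ker\gamma(n)$ preserving decomposables) cannot hold as stated, since $\ker\gamma(n)\cap(\mathfrak m_R A(n)+\mathfrak m_{A(n)}^2)$ is not acyclic---the classes of $\partial x_i'(y)=x_{i-1}(y)^p-px_i(y)$ survive. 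The vanishing of the obstruction for the specific cycles $\tilde z$ at hand has to be argued differently (for instance, when $p=2$ a comparison of $\partial\tilde z$ with $\partial d$ modulo $\mathfrak m_R A(n)+\mathfrak m_{A(n)}^3$ forces the $x_i'(y)$-coefficients in $\tilde z$ into $\mathfrak m_R$).
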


\section{The homotopy Lie algebra} 
\label{sec:HLA}
Fix a surjective map $\vp\colon R \to S$ of local rings with common residue field $k$. First we recall the homotopy Lie algebra $\pi(F^\vp)$ introduced by Avramov. Its structure reflects interesting ring-theoretic properties of $\vp$; 
suitable references include~\cite{Avramov:1984,Avramov:2010,Avramov/Halperin:1986,Avramov/Halperin:1987}. 

\begin{remark}\label{r:HLAHistory}
Homotopy Lie algebras in local algebra play an analogous role to those arising in rational homotopy theory.  Inspired by their utility in the study of finite-type simply-connected CW complexes, Avramov, Halperin, Roos, and others initiated a study of the corresponding Lie algebras in local algebra.  A thorough dictionary between the Lie algebras in rational homotopy theory and those in local algebra may be found in~\cite{Avramov/Halperin:1986}.  Notable applications in commutative algebra include resolving conjectures of Quillen regarding cotangent cohomology~\cite{Avramov/Halperin:1987,Avramov/Iyengar:2010}, as well as settling a conjecture of Vasoncelos on the conormal module~\cite{Briggs:2020}.  Their construction is described in the sequel.
\end{remark}

\begin{chunk}\label{c:HLA}
Let $R[X]\xra{\simeq} S$ be a minimal model for $\vp$, and let $k[X]$ denote $k\otimes_R R[X].$ Also let $kX^n$ be the $k$-linear space generated by all monomials in $X$ of degree $n$; the space $kX^1=kX$ is canonically isomorphic to $\ind (R[X])$ (as defined in \cref{c:ind}), and so these will be naturally identified. The assumption that $R[X]$ has decomposable differential yields the decomposition 
\[
\del^{k[X]}=\del^{[2]}+\del^{[3]}+\ldots 
\] with $\del^{[i]}|_{kX}:kX\to kX^{i}$. The equality $\del^{[2]}\del^{[2]}=0$ defines a graded Lie algebra structure on $(\shift kX)^*$ as recalled below. In what follows, $(-)^*$ denotes $k$-linear duality, and we equip $(\shift kX)^*$ with the dual basis of functionals $\shift x^*$ where 
\[
\shift x^*(\shift x')=\begin{cases}
 1 & x=x'\\
 0& x\neq x'
\end{cases}
\] for $x,x'\in X.$

As a graded $k$-space the homotopy Lie algebra of $\vp$, denoted $\pi(F^\vp)$, is $(\shift kX)^*$. The Lie structure on $\pi(F^\vp)$ is defined using $\del^{[2]}$ along with
a fixed well-ordering of $X$, as usual first ordered by homological degree. Namely, writing 
\[
\del^{[2]}(x_l)=\sum_{i<j}q_{ij}^l x_ix_j+\sum_i q_i^l x_i^2\,,
\] the Lie bracket and reduced square on $(\shift kX)^*$ are defined, with $j>i$, as 
\[
[\shift x_j^*,\shift x_i^*]=\sum_l q^l_{ij} \shift x_l^* \ \text{ and } \ (\shift x_i^*)^{[2]}=-\sum_l q_i^l\shift x_l^*\,.
\]
\end{chunk}

\begin{discussion}\label{functoriality/betternotation}
There are two functors involved in the formation of the homotopy Lie algebra of a surjective local homomorphism, which explains our choice of notation above. Namely, the first functor $F$ associates to the surjective local homomorphism $\vp\colon R\to S$ the (derived) fiber 
\[
F^\vp\coloneqq k\lotimes_R S\simeq k[X]\,
\] in the notation of \cref{c:HLA}. The functor $\pi$ is naturally defined on the category of semifree dg $k$-algebras with decomposable differential as discussed in \emph{loc.\@ cit.}; the target category is that of graded Lie algebras over $k$ of finite type. 

One can also regard $\pi$ as the functor which associates to a semifree dg $k$-algebra with decomposable differential the homology of the dg Lie algebra of $\Gamma$-derivations of an acyclic closure of $k$ over $A$; this is the perspective taken in \cite[Chapter~10]{Avramov:2010}, though this will be less relevant for what follows. 
\end{discussion}

We now arrive at one of the main applications of \cref{t:refinement} which shows the homotopy Lie algebra of a non-complete intersection weakly-closed map always has an infinitely generated abelian Lie subalgebra.

\begin{theorem}\label{t:abelian}
Let $\vp\colon R\rightarrow S$ be a surjective local map with prime residual characteristic $p>0$ and let $R\langle Y\rangle$ be an acyclic closure of $\vp$. If $\vp$ is weakly-closed, then there is an isomorphism of graded Lie algebras 
\[
\pi(F^\vp)\cong L\times L^\infty \ \text{ with } \ [\pi(F^\vp),L^\infty]=0
\]
where $L\cong (\shift kY)^*$ and $L^\infty\cong (\shift kY^{(p^\infty)}\oplus \shift^2 kY^{(p^\infty)})^*$ as $k$-spaces. Furthermore, when $p>2$ the reduced square on $L^\infty$ is trivial. When $p=2$, the reduced square of $(\shift y^{(2^i)})^*\in L^\infty$ is given by $(\shift^2 y^{(2^{i+1})})^*$ and trivial otherwise.
\end{theorem}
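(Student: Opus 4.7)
The plan is to leverage \cref{t:refinement}, which provides an explicit description of the minimal model $R[X]$ of $\vp$ together with formulas for its differential. The homotopy Lie algebra $\pi(F^\vp) = (\shift kX)^*$ will then be computed directly from the quadratic part $\del^{[2]}$ of the differential on the fiber $k[X] = k\otimes_R R[X]$. The bijections $X(0)\leftrightarrow Y$, $X(i)\leftrightarrow Y_{\rm{even}}^{(p^i)}$ (for $i \geq 1$), and $X'(i)\leftrightarrow \shift Y_{\rm{even}}^{(p^i)}$ from \cref{t:refinement} give the graded $k$-space identification $\pi(F^\vp) \cong L \oplus L^\infty$ immediately; the work lies in identifying the Lie algebra structure.

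First I would compute $\del^{[2]}$ on each generator of $X$ using the differential formulas from \cref{t:refinement}. For $i \geq 1$, $\del(x_i(y)) = \tilde{z}(y)\prod_{j=0}^{i-1} x_j(y)^{p-1}$ has $X$-weight at least $2 + (p-1)i \geq 3$ (since weakly-closedness forces $\tilde{z}(y) \in (X)^2 k[X]$), giving $\del^{[2]}(x_i(y)) = 0$. The relation $\del(x_i'(y)) = x_{i-1}(y)^p - p\, x_i(y)$ reduces in $k[X]$ to $x_{i-1}(y)^p$, which gives $\del^{[2]}(x_i'(y)) = 0$ when $p \geq 3$ and $\del^{[2]}(x_i'(y)) = x_{i-1}(y)^2$ when $p = 2$. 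Finally, $\del^{[2]}(x_0(y))$ equals the $X$-quadratic part of the chosen lift $\tilde{z}(y)$.

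From these computations the structural claims follow by unpacking the definitions in \cref{c:HLA}. Since $\del^{[2]}$ of every generator in $X^\infty \coloneqq \bigsqcup_{i \geq 1}(X(i)\sqcup X'(i))$ is either zero or a pure square, no mixed product $xx'$ with either factor in $X^\infty$ appears as a cross term in any $\del^{[2]}(x_l)$. This forces $[\pi(F^\vp), L^\infty] = 0$ and $[L, L] \subseteq L$, provided that $\del^{[2]}(x_0(y))$ also avoids mixed terms involving $X^\infty$. The reduced square formulas then fall out: for $p \geq 3$, no $\del^{[2]}$ contributes a pure square in $X^\infty$ variables, so the reduced square on $L^\infty$ vanishes; for $p = 2$, the identity $\del^{[2]}(x_{i+1}'(y)) = x_i(y)^2$ translates (using $-1 = 1$ in characteristic two) to $(\shift x_i(y)^*)^{[2]} = \shift x_{i+1}'(y)^*$, which matches the stated formula via the identifications $\shift x_i(y)^*\leftrightarrow (\shift y^{(p^i)})^*$ and $\shift x_{i+1}'(y)^*\leftrightarrow (\shift^2 y^{(p^{i+1})})^*$.

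The hard part will be establishing that $\del^{[2]}(x_0(y))$ contains no mixed terms involving $X^\infty$. The weakly-closed hypothesis forces $\del(y) \in \m_R R\langle Y\rangle + (Y)^2$, and a direct calculation shows that in characteristic $p$ an element of the form $y_a \cdot y_b^{(p^i)}$ (with $i \geq 1$) does not lie in $(Y)^2$; hence any mixed $X(0)\cdot X^\infty$ summand in $\tilde{z}(y)$ whose image under $\comp^\vp$ is $y_a \cdot y_b^{(p^i)}$ must be compensated by contributions from higher-weight $X$-terms of $\tilde{z}(y)$. Exploiting this obstruction, I would modify $\tilde{z}(y)$ by cycles in $\ker \comp^\vp$ and, if necessary, by a suitable change of variables in the minimal model, so that the quadratic part of $\tilde{z}(y)$ lies entirely in $(X(0))^2 k[X]$. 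Combined with the earlier computation, this completes the identification of $\pi(F^\vp)$ with $L\times L^\infty$.
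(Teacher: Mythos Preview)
Your strategy is exactly the paper's: invoke \cref{t:refinement} for an explicit minimal model, then read the Lie structure off the quadratic differential $\del^{[2]}$ on $k[X]$ using the formulas in \cref{c:HLA}. Your computations of $\del^{[2]}$ on the variables in $X(i)$ and $X'(i)$ for $i\geqslant 1$ reproduce the paper's, and your identification of the remaining issue---controlling the quadratic part of $\tilde z(y)$ for $y\in Y$---is precisely what the paper compresses into the phrase ``a direct calculation using \cref{t:refinement}.''

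Your observation that $y_a\,y_b^{(p^i)}\notin (Y)^2$ (and likewise $y_a^{(p^i)}y_b^{(p^j)}\notin (Y)^2$) is correct and is essentially the content of that calculation. The point is that $\comp^\vp$ carries the ``reduced'' monomial basis of $k[X]/(\,X',\{x_i(y)^p\}\,)$ bijectively (up to units) to the normal $\Gamma$-monomial basis of $k\langle Y\rangle$; hence the coefficient in $\tilde z$ of any quadratic monomial $x_i(y_a)x_j(y_b)$ with $\max(i,j)\geqslant 1$ equals, up to a unit, the coefficient in $z$ of a $\Gamma$-monomial lying outside $(Y)^2$, and so vanishes automatically. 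No modification of $\tilde z$ is needed for these terms, and the ``compensation'' mechanism you describe does not arise.

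Where your sketch (and, to be fair, the paper's brief proof) is less convincing is the case of quadratic terms $x_a\cdot x_j'(y_b)$ involving an $X'$-variable. These lie in $\ker\comp^\vp$, so the weakly-closed hypothesis places no direct constraint on their coefficients. Your proposed fix---modify $\tilde z$ by a cycle in $\ker\comp^\vp$---does not remove them: one checks that for any $w\in\ker\comp^\vp$ the quadratic part of $\del(w)$ in $k[X]$ is a combination of pure squares $x_i(y)^2$ (only when $p=2$) and never a cross term of this shape, so adding $\del(w)$ cannot cancel an $x_a x_j'(y_b)$ term. A change of variables in the already-adjoined generators runs into the same obstruction. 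If you pursue this route you will need a sharper argument---for instance, arranging inductively that each $\tilde z(y)$ can be chosen with no $X'$-variable appearing at all, or arguing directly that any such cross term forces a contradiction with $\del^2=0$.
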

\begin{proof}
Take $R[X]=R[X(0),X(1),X'(1),\dots]$ as described in \cref{t:refinement} to be a minimal model of $\vp$. 
Define the $k$-subspaces of $\pi(F^\vp)$:
\[
L\coloneqq (\shift kX(0))^* \ \text{ and } \ L^\infty\coloneqq \left(\bigoplus_{i=1}^\infty \shift kX(i)\oplus \shift^2 kX'(i)\right)^*\,.
\]
The asserted isomorphisms on $k$-spaces are induced by the obvious isomorphisms
\[
kY\cong kX(0) \ \text{ and } \ kY^{(p^\infty)}\cong \bigoplus_{i=1}^\infty kX(i)
\] given by $y\mapsto x_0(y)$ for each $y\in Y$ and $y^{(p^j)}\mapsto x_j(y)$ for each $y\in Y_{\textrm{even}}$ and any $j\geqslant 1$, respectively. 

When $p\neq 2$, a direct calculation using \cref{t:refinement} shows that for $i>0$ no element of $X(i)$ or $X'(i)$ appears as a summand of any element in the image of $\del^{[2]}$ and that $\del^{[2]}(X(i)\cup X'(i))=0$. Using these facts and \cref{c:HLA} to compute the Lie bracket and reduced square on $\pi(F^\vp)$ it follows that $L$ and $L^\infty$ are Lie subalgebras of $\pi(F^\vp)$ with the property $[\pi(F^\vp),L^\infty]=0$ with trivial reduced square on $L^\infty$.

Finally, when $p=2$, a similar calculation together with the equality 
\[
\del^{[2]}(x_{i+1}'(y))=x_i(y)^2
\] 
yields the last conclusion. 
\end{proof}

The next corollary, along with \cref{t:abelian}, establishes \cref{introthm2} from the Introduction. 
\begin{corollary}\label{cor:abelian}
If $\vp$ is closed, then $\pi(F^\vp)$ is an abelian Lie algebra. Furthermore, $\pi(F^\vp)$ is abelian as a restricted Lie algebra whenever the residue field has characteristic different from 2.
\end{corollary}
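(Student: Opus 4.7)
The plan is to deduce Corollary \ref{cor:abelian} by strengthening Theorem \ref{t:abelian}: since $L^\infty$ is already central in $\pi(F^\vp)$ by that theorem, it suffices to show the subalgebra $L\cong (\shift kY)^*$ is also abelian (and has trivial reduced square when $\ch k\neq 2$). The strategy is to show that, under the stronger closedness hypothesis, $\del^{[2]}$ vanishes on the generators $X(0)$ of the minimal model described in Theorem \ref{t:refinement}.

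The first step is to refine the inductive construction in the proof of Theorem \ref{t:refinement}. When $\vp$ is closed, $\del(y)\in \m_R R\langle Y_{\leqslant n}\rangle$ for each $y$, and I would argue that the comparison map $\comp(n)\colon A(n)\xra{\simeq} R\langle Y_{\leqslant n}\rangle$ restricts to a quasi-isomorphism $\m_R A(n)\xra{\simeq}\m_R R\langle Y_{\leqslant n}\rangle$. Indeed, both $A(n)$ and $R\langle Y_{\leqslant n}\rangle$ are bounded-below complexes of free $R$-modules, so tensoring $\comp(n)$ with the inclusion $\m_R\hookrightarrow R$ preserves the quasi-isomorphism; hence the cycle $\del(y)$ can be lifted to a cycle $\tilde z\in \m_R A(n)$.

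With this choice, plugging into the formulas of Theorem \ref{t:refinement} gives $\del(x_0(y)),\,\del(x_i(y))\in \m_R R[X]$ for all relevant $i$, and $\del(x_i'(y))=x_{i-1}(y)^p-p x_i(y)\equiv x_{i-1}(y)^p \pmod{\m_R R[X]}$, using $p\in\m_R$. The induced differential on $k[X]$ therefore vanishes on $X(0)\cup\bigcup_{i\geqslant 1}X(i)$ and sends $x_i'(y)\mapsto x_{i-1}(y)^p$. Reading off the Lie structure from \cref{c:HLA}: when $p>2$ the image $x_{i-1}(y)^p$ has degree $p>2$, so $\del^{[2]}$ is identically zero on every generator and both the bracket and the reduced square are trivial. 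When $p=2$, $\del^{[2]}(x_i'(y))=x_{i-1}(y)^2$ is a pure square term, contributing to the reduced-square coefficients $q_i^l$ but never to a cross-term $q_{ij}^l$ with $i\neq j$; hence the Lie bracket remains trivial.

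The main technical hurdle is verifying that $\tilde z$ can be chosen in $\m_R A(n)$ consistently across the induction in Theorem \ref{t:refinement}; once this refinement is in place, the remaining computations follow directly from the explicit differential formulas and the definitions in \cref{c:HLA}.
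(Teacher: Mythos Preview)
Your proposal is correct and follows essentially the same approach as the paper's proof: both reduce to showing that closedness forces $\del^{[2]}$ to vanish on the variables in $X(0)$, so that $L$ is abelian (with trivial reduced square for $p\neq 2$), and then Theorem~\ref{t:abelian} handles $L^\infty$. The paper simply asserts this vanishing, whereas you spell out the mechanism---lifting $\del(y)\in\m_R R\langle Y_{\leqslant n}\rangle$ to a cycle $\tilde z\in\m_R A(n)$ via the surjective quasi-isomorphism $\comp(n)$ restricted to $\m_R(-)$---and then read off the Lie structure from the explicit differential formulas in Theorem~\ref{t:refinement}; note that for the exact lift (not merely up to homology) you are implicitly using that $\comp(n)$ is surjective with contractible kernel, so the same holds after applying $\m_R\otimes_R(-)$. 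You should also mention the characteristic zero case separately, as the paper does, since Theorem~\ref{t:refinement} is stated only for $p>0$.
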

\begin{proof}When $R$ has characteristic zero, \cref{r:minmodelequalacycliclcosure} and the definition of a closed map ensure that $\del^{[2]}=0$ on all of $X$, so the formula for the bracket in \cref{c:HLA} yields that $\pi(F^\vp)$ is abelian as a restricted Lie algebra. 

Next, consider the case when $\ch k=p>0$. The assumption that $\vp$ is closed guarantees $\del^{[2]}$ vanishes on any variable belonging to $X(0)$,  in the notation of \cref{t:comparison}. Hence the Lie algebra $L$ in \cref{t:abelian} is an abelian restricted Lie subalgebra of $\pi(F^\vp)$.
\end{proof}

\begin{remark}\label{r:abelian}
When $\vp$ is a map of finite projective dimension---in the sense that $S$ has finite projective dimension over $R$---the only case that $\pi(F^\vp)$ is abelian is when $\vp$ is complete intersection; cf.\@ \cite[Theorem~C]{Avramov/Halperin:1987}.  \cref{cor:abelian} provides a wide class of maps (not necessarily of finite projective dimension) for which $\pi(F^\vp)$ is abelian; see the examples in \cref{c:closedexamples}. A consequence is that there cannot be a direct analog to \cite[Theorem~C]{Avramov/Halperin:1987} for detecting the quasi-complete intersection property in terms of an abelian Lie algebra structure of $\pi(F^\vp)$. 
\end{remark}

\section{Applications involving deviations and closing remarks}
\label{sec:qci}
Let $\vp\colon R\rightarrow S$ be a surjective local map, and let $k$ denote the common residue field of $R$ and $S$. 
In this section we present  applications involving the deviations of $\vp$; see \cref{c:deviations}, below, for the definition. 

The rigidity and vanishing of the deviations of a local homomorphism have been well-studied; see for example, \cite{Avramov:1999,Avramov/Halperin:1987,Avramov/Iyengar:2003,Briggs:2018}. However, a crucial assumption for many of these works is that $\vp$ is a map of finite projective dimension. The corollaries in this section add to the rigidity results above, without the restriction on maps of finite projective dimension. Instead, our results apply when $\vp$ is a weakly-closed surjective local map with residual characteristic $p>0$; the generic examples in \cref{c:closedexamples} are maps of infinite projective dimension.  

\begin{chunk}\label{c:deviations}
The numbers $\ve_i(\vp):=\dim_k \pi^{i}(F^\vp)$ are called the \emph{deviations of $\vp$}. 
They are encoded in the Poincar\'e series \[P(t)=\sum_{i\geqslant 0}\dim_k (\Tor^{k\lotimes_S R}_i(k,k))t^i\]
of $k$ over the derived fiber $k\lotimes_R S$ according to the equality
\[P(t) = \frac{\prod_{i=1}^\infty(1+t^{2i-1})^{\ve_{2i-1}(\vp)}}{\prod_{i=1}^\infty(1-t^{2i})^{\ve_{2i}(\vp)}}\,.\]  Furthermore, upon fixing a minimal model $R[X]$ for $\vp$, we see that the number of variables adjoined in $X_i$ is exactly $\ve_{i+1}(\vp).$ These naturally generalize the deviations of a local ring; a standard reference for the latter is \cite[Section~7]{Avramov:2010}, and see the references contained in \emph{loc.\@ cit}.
\end{chunk}

\begin{chunk}\label{c:decomposable_deviations}When $\vp$ is a weakly-closed surjective map and $R$ has residual characteristic $p>0$, with acyclic closure $R\langle Y\rangle$, \cref{t:comparison} shows that the deviations of $\vp$ are completely determined by the numbers $\gdev_i(\vp):=\dim_k(\gind_{i-1}(R\langle Y\rangle))$; the latter value is exactly the number of $\Gamma$-variables adjoined in $Y_{i-1}$ and referred to as the $i^{\text{th}}$ $\Gamma$-deviation of $\vp$.
In particular, we have
\[\ve_i(\vp)=
 \begin{dcases}
 \sum_{s=0}^t \gdev_{2jp^s+1}(\vp) & i=2jp^t+1 \\
 \gdev_{2jp^t+2}(\vp)+\sum_{s=0}^{t-1} \gdev_{2jp^s+1}(\vp) & i=2jp^t+2\\
 \gdev_i(\vp) & \text{otherwise}
 \end{dcases}
\]

Furthermore, since the first steps in the inductive constructions of an acyclic closure and a minimal model coincide, the equalities below always hold \[ 
\ve_2(\vp)=\gdev_2(\vp) \ \text{ and } \ \ve_3(\vp)=\gdev_3(\vp)\,.\] 
\end{chunk}
It is known that the eventual vanishing of the deviations of a surjective local map is equivalent to the map being complete intersection when it is a map of finite projective dimension, or more generally, when the map has finite weak category; see \cite[Theorem~C]{Avramov/Halperin:1987} for the former, and \cite[Section~3]{Avramov:1999} for the latter (as well as \cite[Theorem~5.4]{Avramov/Iyengar:2003}). 
The numeric relations listed in \cref{c:decomposable_deviations} establishes the equivalence for a completely different class of surjective homomorphisms:
\begin{corollary}\label{cor:ci}
If $\vp\colon R\rightarrow S$ is a weakly-closed surjective local map with residual characteristic $p>0$, then the following are equivalent: 
\begin{enumerate}
  \item $\vp$ is complete intersection; 
  \item $\ve_i(\vp)=0$ for all $i\gg 0$; 
  \item $\ve_i(\vp)=0$ for $i=2p^t+1$ or $i=2p^t+2$ for some $t\geqslant 1$.
\end{enumerate}
\end{corollary}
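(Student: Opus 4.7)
The plan is to extract the equivalence (1) $\Leftrightarrow$ (3) directly from the numerical identities relating the deviations $\ve_i(\vp)$ to the $\Gamma$-deviations $\gdev_i(\vp)$ recorded in \cref{c:decomposable_deviations}, which in turn are consequences of \cref{t:refinement}. A preliminary observation is that \emph{$\vp$ is complete intersection if and only if $\gdev_3(\vp)=0$.} Indeed, $\gdev_3(\vp)=|Y_2|$, and by the inductive construction of the acyclic closure in \cref{c:acyclicclosure}, the set $Y_2$ is adjoined to kill a minimal generating set of $\H_1(R\langle Y_1\rangle)$, i.e.\ of the first Koszul homology on a minimal generating set of $\ker\vp$. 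Hence $Y_2=\emptyset$ is equivalent to that generating set forming a regular sequence, which is the definition of complete intersection; and in that case $R\langle Y_1\rangle$ is already a resolution, so $Y_i=\emptyset$ for every $i\geqslant 2$.

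With this in hand the implication (1) $\Rightarrow$ (2) is immediate: when $\vp$ is complete intersection, $Y_{\rm{even}}=\emptyset$, so \cref{t:comparison} forces $kX\cong kY$ to be concentrated in degree one, and therefore $\ve_i(\vp)=0$ for all $i\geqslant 3$. The implication (2) $\Rightarrow$ (3) is trivial, since (3) only asks for vanishing at one position in an infinite family.

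The only remaining step is (3) $\Rightarrow$ (1). Specializing the formulas in \cref{c:decomposable_deviations} to $j=1$ yields
\[
\ve_{2p^t+1}(\vp)=\sum_{s=0}^{t}\gdev_{2p^s+1}(\vp) \quad\text{and}\quad \ve_{2p^t+2}(\vp)=\gdev_{2p^t+2}(\vp)+\sum_{s=0}^{t-1}\gdev_{2p^s+1}(\vp),
\]
and in both expressions the $s=0$ term of the sum is precisely $\gdev_3(\vp)$. Since all $\Gamma$-deviations are nonnegative integers, the vanishing of either $\ve_{2p^t+1}(\vp)$ or $\ve_{2p^t+2}(\vp)$ for some $t\geqslant 1$ forces $\gdev_3(\vp)=0$, and by the preliminary observation $\vp$ is complete intersection. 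No serious obstacle presents itself: the heavy lifting has already been done in \cref{t:refinement}, and the only insight required here is that $\gdev_3(\vp)$ appears as a summand in every one of the relevant formulas, so a single vanishing in the sequence $\{\ve_i(\vp)\}$ at a position of the form $2p^t+1$ or $2p^t+2$ is already enough to detect the complete intersection property.
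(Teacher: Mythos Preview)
Your proof is correct and follows the approach the paper intends: the paper states the corollary immediately after \cref{c:decomposable_deviations} with the sentence ``The numeric relations listed in \cref{c:decomposable_deviations} establishes the equivalence,'' leaving the details to the reader. You have supplied exactly those details---the preliminary characterization $\gdev_3(\vp)=0\Leftrightarrow\vp$ is complete intersection, together with the observation that $\gdev_3(\vp)$ appears as a nonnegative summand in each of the relevant formulas when $j=1$---and these are precisely the steps the paper's one-line justification is pointing to.
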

Our last main result, another immediate consequence of \cref{c:decomposable_deviations}, shows that rigidity of certain deviations detects the quasi-complete intersection property in positive characteristic among weakly-closed maps; this should be compared with \cite[Theorem~5.3]{Avramov/Henriques/Sega:2013} and \cite[Theorem~33]{Briggs:2018} which characterizes the quasi-complete intersection property in terms of the functorial map $\pi(F^{\rho})\to \pi(F^{\vp\rho})$ where $\rho\colon Q\to R$ is a Cohen presentation of $R$.
\begin{corollary}\label{cor:qci}
Let $\vp\colon R\rightarrow S$ be a weakly-closed surjective local map with residual characteristic $p>0$. Then $\vp$ is a quasi-complete intersection homomorphism if and only if  there are equalities \[\ve_i(\vp)=\begin{cases}
\ve_3(\vp) & \text{ if }i=2p^t+1 \text{ or } i=2p^t+2\\
0   & \text{ otherwise}
\end{cases}\]
 for all $i\geqslant 4$ and $t\geqslant1$. 
\end{corollary}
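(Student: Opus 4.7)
The plan is to deduce \cref{cor:qci} directly from the arithmetic encoded in \cref{c:decomposable_deviations}, which expresses each $\ve_i(\vp)$ as an explicit sum of $\Gamma$-deviations $\gdev_j(\vp)$. The starting observation is that $\vp$ is quasi-complete intersection if and only if the acyclic closure $R\langle Y\rangle$ satisfies $Y_i = \emptyset$ for $i \geqslant 3$, which translates to $\gdev_i(\vp) = 0$ for all $i \geqslant 4$; we will also use repeatedly that $\gdev_3(\vp) = \ve_3(\vp)$. Every integer $i \geqslant 4$ admits a unique presentation $i = 2jp^t + \delta$ with $j \geqslant 1$ coprime to $p$, $t \geqslant 0$, and $\delta \in \{1,2\}$, and the three cases of the formula in \cref{c:decomposable_deviations} correspond to $t = 0$, $t \geqslant 1$ with $j = 1$, and $t \geqslant 1$ with $j \geqslant 2$.

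For the forward direction, assume $\vp$ is quasi-complete intersection. In the "otherwise" case $t = 0$, one has $\ve_i(\vp) = \gdev_i(\vp) = 0$, and such $i$ is not of the form $2p^s+1$ or $2p^s+2$ for any $s \geqslant 1$. In the case $t \geqslant 1$ with $j \geqslant 2$, every summand $\gdev_{2jp^s+1}(\vp)$ or $\gdev_{2jp^t+2}(\vp)$ has index at least $2j + 1 \geqslant 5$, so all vanish and $\ve_i(\vp) = 0$. In the special case $t \geqslant 1$ with $j = 1$, the same reasoning kills every term except $\gdev_3(\vp)$, giving $\ve_i(\vp) = \gdev_3(\vp) = \ve_3(\vp)$.

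For the backward direction, assume the stated deviation pattern; we show $\gdev_i(\vp) = 0$ for all $i \geqslant 4$ by induction on the exponent $t$ in the parametrization above. The base case $t = 0$ is immediate: every such $i \geqslant 4$ falls in the "otherwise" clause of the corollary, so the "otherwise" formula yields $\gdev_i(\vp) = \ve_i(\vp) = 0$. For the inductive step with $t \geqslant 1$, fix $j \geqslant 1$ coprime to $p$ and apply the formula to $i = 2jp^t + 1$: the inductive hypothesis forces $\gdev_{2jp^s+1}(\vp) = 0$ for $1 \leqslant s \leqslant t-1$, so the formula collapses to
\[
\ve_i(\vp) \;=\; \gdev_{2jp^t+1}(\vp) \;+\; \begin{cases} \gdev_3(\vp) & j = 1 \\ 0 & j \geqslant 2. \end{cases}
\]
When $j = 1$, $i = 2p^t + 1$ is in the special form, so $\ve_i(\vp) = \ve_3(\vp) = \gdev_3(\vp)$ and hence $\gdev_{2p^t+1}(\vp) = 0$; when $j \geqslant 2$, $i$ is in the "otherwise" range, so $\ve_i(\vp) = 0$ and $\gdev_{2jp^t+1}(\vp) = 0$. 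The identical argument applied to $i = 2jp^t + 2$ (using the second formula in \cref{c:decomposable_deviations}) yields $\gdev_{2jp^t+2}(\vp) = 0$, completing the induction.

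The main obstacle is purely bookkeeping: one must verify that the unique decomposition $i = 2jp^t + \delta$ aligns the three clauses of the formula with the two cases of the corollary, and that the induction resolves $\gdev_{2jp^t+\delta}(\vp)$ from strictly earlier $\Gamma$-deviations (together with the anchor value $\gdev_3(\vp) = \ve_3(\vp)$). Once this alignment is in place, both directions of the corollary reduce to solving a triangular system of linear equations in the $\gdev_j(\vp)$.
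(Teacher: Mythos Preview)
Your proof is correct and follows exactly the approach the paper intends: the paper states only that the corollary is an immediate consequence of the formulas in \cref{c:decomposable_deviations}, and you have supplied the explicit bookkeeping that makes this immediate. The one small imprecision is your sentence that ``the three cases of the formula in \cref{c:decomposable_deviations} correspond to $t=0$, $t\geqslant 1$ with $j=1$, and $t\geqslant 1$ with $j\geqslant 2$''; the three displayed cases in \cref{c:decomposable_deviations} are actually split by the parity of $i$ (the ``otherwise'' clause being the $t=0$ situation, as you correctly use it), while your trichotomy on $(t,j)$ is the one relevant to matching against the two cases of the corollary---but this does not affect the argument.
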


\begin{remark}
\cref{cor:ci,cor:qci} fail when the residual characteristic is zero.  In particular, if $\vp$ is a quasi-complete intersection that is not a complete intersection, then $\vp$ is a closed map satisfying $\varepsilon_3(\vp)\neq 0$ and $\varepsilon_{i}(\vp)=0$ for $i\geq 4$.  Hence $\vp$ satisfies conditions (2) and (3) of \cref{cor:ci}, while (1) fails.  Furthermore, $\vp$ is a quasi-complete intersection with $\varepsilon_{i}(\vp)=0\neq\varepsilon_{3}(\vp)$ whenever $i=2p^t+1$ or $i=2p^t+2$, so the forward implication of \cref{cor:qci} fails.
\end{remark}

We end this article with some closing remarks. 

\begin{remark}
The results in the present article were stated for surjective homormophisms; they naturally extend---and can be deduced from the surjective case---to the setting of (arbitrary) local homomorphisms using the theory of Cohen factorizations introduced in \cite{Avramov/Foxby/Herzog:1994}. We leave these deductions to the interested reader.
\end{remark}

\begin{remark} \label{r:question}
Recall by \cref{r:decomposability}, $\vp$ is weakly-closed provided for an acyclic closure $R\langle Y\rangle$ for $\vp$, the boundary $\del y$ does not contain a nonzero summand from $kY^{(p^\infty)}$ for each $y\in Y$; this is needed in the core result \cref{t:refinement}.
The examples of such maps listed in \cref{c:closedexamples} are closed, which is a priori stronger than requiring $\vp$ be weakly-closed.
An example of a weakly-closed map that is \emph{not} closed, when $k$ has characteristic $p$, is unknown to the authors. 
\end{remark}

\begin{remark}
The Lie algebra $L$ in \cref{t:abelian} is intrinsic to the map $\vp$. Namely, as a $k$-space $L$ is $
(\shift \gind R\langle Y\rangle)^*$ where $R\langle Y\rangle$ is an acyclic closure of $\vp$. If the characteristic of $k$ is different from 2 or 3, without the assumption $\vp$ is weakly-closed, then $L$ acquires a Lie algebra structure by adjusting the formulas for the bracket and reduced square in \cref{c:HLA}. An investigation of this Lie algebra is reserved for another time as the development of this Lie algebra is not entirely relevant to the content in this work.
\end{remark}

\bibliographystyle{amsplain}
\bibliography{qcibib}

\end{document}